\newtheorem{theorem} {{\textsf{Theorem}}}
\newtheorem{proposition}[theorem]{{\textsf{Proposition}}}
\newtheorem{definition}[theorem]{{\textsf{Definition}}}
\newtheorem{remark}[theorem]{{\textsf{Remark}}}
\newtheorem{lemma}[theorem]{{\textsf{Lemma}}}
\newcommand{\xdownarrow}[1]{
	{\left\downarrow\vbox to #1{}\right.\kern-\nulldelimiterspace}
}
\begin{document}

\title{A classification of semi-equivelar gems of PL $d$-manifolds on the surface with Euler characteristic $-1$}
\author{Anshu Agarwal and Biplab Basak$^1$}
	
\date{}
	
\maketitle
	
\vspace{-10mm}
\begin{center}
		
\noindent {\small Department of Mathematics, Indian Institute of Technology Delhi, New Delhi 110016, India.$^2$}

\footnotetext[1]{Corresponding author}
		
\footnotetext[2]{{\em E-mail addresses:} \url{maz228084@maths.iitd.ac.in} (A. Agarwal), \url{biplab@iitd.ac.in} (B. Basak).}
		
\medskip
		
\date{October 21, 2024}
\end{center}
	
\hrule
	
\begin{abstract}
A semi-equivelar gem of a PL $d$-manifold is a regular colored graph that represents the PL $d$-manifold and regularly embeds on a surface, with the property that the cyclic sequence of degrees of faces in the embedding around each vertex is identical.  In  \cite{bb24}, the authors classified semi-equivelar gems of PL $d$-manifolds embedded on surfaces with Euler characteristics greater than or equal to zero. In this article, we focus on classifying semi-equivelar gems of PL $d$-manifolds embedded on the surface with Euler characteristic $-1$. We prove that if a semi-equivelar gem embeds regularly on the surface with Euler characteristic $-1$, then it belongs to one of the following types: $(8^3), (6^2,8), (6^2,12), (10^2,4), (12^2,4),$ $ (4,6,14), (4,6,16), (4,6,18), (4,6,24), (4,8,10), (4,8,12),$ or $(4,8,16)$. Furthermore, we provide constructions that demonstrate the existence of such gems for each of the aforementioned types.
\end{abstract}
	


\noindent {\small {\em MSC 2020\,:} Primary  57Q15; Secondary 05C15, 05C10, 52B70, 52C20. 
		
\noindent {\em Keywords:} Semi-equivelar maps, Graph encoded manifold, Semi-equivelar gems, Regular embedding.}
	
\medskip
	
\section{Introduction}

A gem (graph encoded manifold) of a closed PL $d$-manifold is a  $(d+1)$-regular colored graph representing that manifold (cf. Subsection \ref{crystal}). It is well known that a closed PL $d$-manifold is represented by a gem, and a manifold can be represented by more than one (non-isomorphic) gem. A proof of classification of surfaces using gems can be found in \cite{b17}. A gem always admits a regular embedding on a surface \cite{ga81i}. After embedding on the surface, if the cyclic sequence of degrees of faces in the embedding around each vertex is identical, then we call that gem a semi-equivelar gem.
Basak and Binjola introduced this notion of semi-equivelar gem in \cite{bb24}, where they classified the semi-equivelar gems embedded on surfaces with non-negative Euler characteristics. This article focuses on the surface of Euler characteristic $-1$. Firstly, we compute all the possible types of semi-equivelar colored graphs embedded regularly on the surface of Euler characteristic $-1$ (see Lemma \ref{lemma:possible}). We find $15$ such possibilities: one is a $5$-regular colored graph, two are $4$-regular colored graphs, and twelve are $3$-regular colored graphs. We then observe that a semi-equivelar gem embedded regularly on the surface of Euler characteristic $-1$ cannot be a $5$-regular colored graph or a $4$-regular colored graph (see Theorem \ref{theorem:possible gem}). Finally, for all the remaining twelve embedding types, we construct $3$-regular colored gems embedded regularly on the surface of Euler characteristic $-1$ (see Theorem \ref{thm:construction}).

In \cite{BU121}, the authors classified all semi-equivelar maps on the surface with Euler characteristic $-1$ with up to 12 vertices.  Although semi-equivelar gems are inspired by semi-equivelar maps, they are distinct concepts. Specifically, semi-equivelar maps do not allow any two faces to share more than one edge, whereas semi-equivelar gems do permit such sharing. Additionally, in semi-equivelar gems, each face in the embedding is bounded by an even cycle, while there is no such restriction in semi-equivelar maps. It is important to note that our semi-equivelar gems embedded regularly on the surface with Euler characteristic $-1$  are entirely different from the semi-equivelar maps  on the surface with Euler characteristic $-1$  described in  \cite{BU121}. For further reading on semi-equivelar maps and semi-equivelar gems, refer to \cite{bb24, bk08, dm18,dm22}. 
	
\section{Preliminaries}
The theory of edge-colored graphs provides a method to represent any piecewise-linear (PL) manifold. It is known that any closed connected PL $d$-manifold can be described by a $(d+1)$-regular colored graph, where loops are not allowed. This approach offers a framework for converting between manifolds and graphs, allowing us to study manifolds through their corresponding graphical representations.
	
\subsection{Graph encoded manifolds (gem)} \label{crystal}
	
For a multigraph $\Gamma= (V(\Gamma), E(\Gamma))$ without loops, the edges are labeled (or colored) by $\Delta_d =\{0,1, \dots, d\}$. The coloring is called a {\it proper edge-coloring} if any two adjacent edges have different colors. The members of the set $\Delta_d$ are called the {\it colors} of $\Gamma$. More precisely, for a proper edge-coloring, there exists a surjective map $\gamma: E(\Gamma) \to \Delta_d$ with  $\gamma(e_1) \ne \gamma(e_2)$ for any two adjacent edges $e_1$ and $e_2$. A graph with a proper edge coloring is denoted by $(\Gamma,\gamma)$.
If the degree of each vertex in a graph $\Gamma$ is  $(d+1)$, then it is said to be {\it $(d+1)$-regular}. We refer to \cite{bm08} for standard terminology on graphs. All spaces will be considered in the PL-category.

A  {\it $(d+1)$-regular colored graph} is a pair $(\Gamma,\gamma)$, where $\Gamma$ is a $(d+1)$-regular graph and $\gamma$ is a proper edge-coloring. If there is no confusion with coloration,  $\Gamma$ can be used instead of $(\Gamma,\gamma)$ for $(d+1)$-regular colored graphs. 
%
For each  $(d+1)$-regular colored graph  $(\Gamma,\gamma)$, a corresponding $d$-dimensional simplicial cell-complex ${\mathcal K}(\Gamma)$ is constructed as follows:
	
\begin{itemize}
\item{} for each vertex $u\in V(\Gamma)$, take a $d$-simplex $\sigma(u)$ with vertices labeled by $\Delta_d$;
		
\item{} corresponding to each edge of color $j$ between $u,v\in V(\Gamma)$, identify the ($d-1$)-faces of $\sigma(u)$ and $\sigma(v)$ opposite to $j$-labeled vertices such that the same labeled vertices coincide.
\end{itemize}
If the geometric carrier $|\mathcal{K}(\Gamma)|$  is (PL) homeomorphic to a PL $d$-manifold $M$ then $\mathcal{K}(\Gamma)$ is said to be a {\it colored triangulation} of $M$, and $(\Gamma,\gamma)$ is said to be a {\it gem} (graph encoded manifold) of $M$ (or is said to represent $M$). Clearly, every 3-regular colored graph represents a closed connected surface. It is well known that every closed connected PL $d$-manifold admits a gem. Let $\Gamma$ be a gem representing a manifold $M$. Then, $\Gamma$ is bipartite if and only if $M$ is orientable.
	
Let $(\Gamma,\gamma)$ and $(\bar{\Gamma},\bar{\gamma})$ be two  $(d+1)$-regular colored graphs with color sets $\Delta_d$ and $\bar{\Delta}_d$, respectively. Then $I:=(I_V,I_c):\Gamma \to \bar{\Gamma}$ is called an {\em isomorphism} if $I_V: V(\Gamma) \to V(\bar{\Gamma})$ and $I_c:\Delta_d \to \bar{\Delta}_d$ are bijective maps such that $uv$ is an edge of color $i \in \Delta_d$ if and only if $I_V(u)I_V(v)$ is an edge of color $I_c(i) \in \bar{\Delta}_d$. The graphs $(\Gamma, \gamma)$ and $(\bar{\Gamma}, \bar{\gamma})$ are then said to be {\it isomorphic}.

\subsection{Regular embedding} 

Let $\Gamma$ be a $(d+1)$-regular colored graph. We say that $\Gamma$ embeds regularly on a surface $S$ if it can be embedded on $S$ in such a way that each face of the embedding is bounded by a bi-colored cycle, where the cycle uses two consecutive colors, $\varepsilon_i$ and $\varepsilon_{i+1}$ for some $i$, and indices are taken modulo $d+1$. Here, $\varepsilon = (\varepsilon_0, \dots, \varepsilon_d)$ represents a cyclic permutation of $\Delta_d$. Regular embeddings are of significant interest in combinatorial topology, and numerous notable results on this topic can be found in \cite{b19, bb21, bc17, cp90, fg82, g81, ga81i}. Below, we present several key results from \cite{cp90, ga81i} that are relevant to regular embeddings and will be useful for our article.

\begin{proposition}[\cite{ga81i}]
If $\Gamma$ is a bipartite (respectively, non-bipartite) $(d+1)$-regular colored graph representing a closed connected orientable (respectively, non-orientable) PL $d$-manifold $M$, then for each cyclic permutation $\varepsilon = (\varepsilon_0, \dots, \varepsilon_d)$ of $\Delta_d$, there exists a regular embedding of $\Gamma$ into an orientable (respectively, non-orientable) surface $S$.  
\end{proposition}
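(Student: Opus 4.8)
The plan is to build the embedding by hand. Given the cyclic permutation $\varepsilon=(\varepsilon_0,\dots,\varepsilon_d)$ of $\Delta_d$, I would define a combinatorial embedding scheme on $\Gamma$ --- a rotation $\rho_v$ at each vertex together with a sign $\lambda_e\in\{+1,-1\}$ on each edge --- that is read directly off $\varepsilon$, and then verify that the faces of the induced cellular embedding are exactly the bi-colored cycles of $\Gamma$ whose two colors are consecutive in $\varepsilon$. Here I use the standard facts that any embedding scheme on a connected graph determines a cellular embedding into a closed connected surface, that its faces are produced by the face-tracing algorithm, and that the surface is orientable if and only if the product of $\lambda$ around every cycle is $+1$. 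Connectedness of $\Gamma$ (hence of the surface) follows from the connectedness of $M$.

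For the construction, at a vertex $v$ let $\rho_v^{+}$ be the cyclic ordering of the $d+1$ edges at $v$ by their colors in the order $\varepsilon_0,\varepsilon_1,\dots,\varepsilon_d$, and let $\rho_v^{-}$ be its reverse. If $\Gamma$ is bipartite with parts $A$ and $B$ (the case where $M$ is orientable), take $\rho_v=\rho_v^{+}$ for $v\in A$, $\rho_v=\rho_v^{-}$ for $v\in B$, and $\lambda_e=+1$ for all $e$. If $\Gamma$ is non-bipartite (the case where $M$ is non-orientable), take $\rho_v=\rho_v^{+}$ for every $v$ and $\lambda_e=-1$ for all $e$. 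Now trace the face through an arbitrary corner; at its base vertex $v$ this corner lies between the edges of two colors consecutive in $\varepsilon$, say $\varepsilon_i$ and $\varepsilon_{i+1}$ (indices modulo $d+1$). The role of the $\rho^{+}/\rho^{-}$ alternation in the bipartite case, and of the uniform sign $-1$ in the non-bipartite case, is exactly that each time the trace crosses an edge it toggles between ``reading $\varepsilon$ forward'' and ``reading $\varepsilon$ backward'', so the color encountered at each successive vertex alternates between $\varepsilon_i$ and $\varepsilon_{i+1}$; the traced closed walk is therefore the bi-colored cycle of $\Gamma$ through that corner in colors $\varepsilon_i,\varepsilon_{i+1}$, and it closes up because every such cycle has even length. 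Since every corner is of this kind for some $i$, the faces are precisely the bi-colored cycles of $\Gamma$ that use two consecutive colors of $\varepsilon$, so the embedding is regular in the required sense.

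To finish I would match the orientability of the resulting surface $S$ with that of $M$. In the bipartite case all signs are $+1$, so the product of $\lambda$ along any cycle is $+1$ and $S$ is orientable. In the non-bipartite case the product of $\lambda$ along a cycle equals $(-1)^{\ell}$, where $\ell$ is its length, which is $-1$ on any odd cycle --- and such a cycle exists because $\Gamma$ is not bipartite --- so $S$ is non-orientable. Since $\Gamma$ is bipartite exactly when $M$ is orientable, $S$ is orientable precisely when $M$ is, which is what the proposition asserts.

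The step I expect to be the main obstacle is the face-tracing verification: after fixing a precise convention for the face-tracing algorithm one must check carefully that, for the prescribed $\rho$ and $\lambda$, a trace begun at a $\{\varepsilon_i,\varepsilon_{i+1}\}$-corner never leaves the colors $\varepsilon_i,\varepsilon_{i+1}$ and returns exactly to its starting flag --- i.e., that the intended bi-colored cycles are neither merged nor split. An alternative route that sidesteps rotation-system conventions is to construct $S$ directly by taking one polygon for each bi-colored cycle of $\Gamma$ in two consecutive colors and gluing the polygons along the edges of $\Gamma$ (each edge of color $\varepsilon_j$ lying on exactly the two cycles colored $\{\varepsilon_{j-1},\varepsilon_j\}$ and $\{\varepsilon_j,\varepsilon_{j+1}\}$, hence on exactly two polygons); there the careful point becomes checking that the link of every vertex is a single circle, which is exactly where the cyclicity of the permutation $\varepsilon$ enters --- it forces the $d+1$ polygons around a vertex to close into one chain.
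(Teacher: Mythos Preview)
The paper does not give its own proof of this proposition; it is quoted from \cite{ga81i} without argument, so there is nothing in the paper to compare your proof to line by line. That said, your construction is exactly the classical one due to Gagliardi: prescribe at every vertex the cyclic order of edges dictated by $\varepsilon$, handle the bipartite case by reversing the rotation on one color class with all signs $+1$, and handle the non-bipartite case by keeping the same rotation everywhere with all signs $-1$; then the face-tracing algorithm produces precisely the $\{\varepsilon_i,\varepsilon_{i+1}\}$-cycles, and the orientability of $S$ is read off from the sign products. Your reasoning is sound, including the orientability check via $(-1)^\ell$ on an odd cycle in the non-bipartite case, and your alternative ``glue one polygon per bi-colored cycle'' description is in fact the way Gagliardi originally phrases the construction. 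The only cosmetic point is that the hypothesis ``representing a closed connected PL $d$-manifold $M$'' is used solely for connectedness of $\Gamma$ and for the bipartite/orientable correspondence; the embedding itself exists for any connected $(d+1)$-regular colored graph, as your argument shows.
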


\begin{proposition}[\cite{ga81i}]\label{prop: non-exist embedding}
 A bipartite graph cannot be embedded regularly on a non-orientable surface, and a non-bipartite graph cannot be embedded regularly on an orientable surface.   
\end{proposition}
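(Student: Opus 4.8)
The plan is to prove the equivalent statement that, for the regular embedding of $\Gamma$ associated to a cyclic permutation $\varepsilon=(\varepsilon_0,\dots,\varepsilon_d)$ of $\Delta_d$, the embedding surface is orientable if and only if $\Gamma$ is bipartite; since by definition every regular embedding is taken with respect to some such $\varepsilon$, the two assertions of the proposition are exactly the two directions of this equivalence. The first step is to realise the embedding as a band (ribbon) surface: a disk $D_v$ for each vertex $v$ and a band $B_e$ along each edge $e$. Because every $2$-cell of a regular embedding is a bicolored cycle using two \emph{consecutive} colors $\varepsilon_i,\varepsilon_{i+1}$, the cyclic order in which the $d+1$ incident colors appear around a disk $D_v$ is forced to be either $(\varepsilon_0,\dots,\varepsilon_d)$ or its reverse; I call $v$ \emph{direct} or \emph{reverse} accordingly. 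Then I would examine the picture along a single edge $e=uv$ of color $\varepsilon_i$, whose two sides carry the bicolored cycle of colors $\{\varepsilon_{i-1},\varepsilon_i\}$ and the bicolored cycle of colors $\{\varepsilon_i,\varepsilon_{i+1}\}$ through $e$: tracing how these two cycles must match up across $B_e$ shows that $B_e$ carries no half-twist exactly when $u$ and $v$ are of opposite type, and carries a half-twist when they are of the same type.

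Granting this local rule, the remainder is a routine orientability count. Choose any labelling of the vertices by the two types that is consistent along a fixed spanning tree (so all tree bands are untwisted), and record which non-tree bands are twisted. By the standard criterion, the band surface is orientable if and only if every cycle of $\Gamma$ traverses an even number of twisted bands; and walking once around a cycle $C$, the vertex type flips exactly at the untwisted bands of $C$, so, since it must return to its starting value, the number of untwisted bands of $C$ is even, and hence the number of twisted bands of $C$ has the same parity as the length $|C|$. Consequently every cycle of $\Gamma$ meets an even number of twisted bands if and only if every cycle of $\Gamma$ is even, i.e. if and only if $\Gamma$ is bipartite. This gives the claimed equivalence, and therefore the proposition.

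I expect the one genuinely delicate point to be the local twist rule — that is, fixing the ribbon-graph conventions so that the requirement that all faces be the prescribed bicolored cycles really does translate into the rule that $B_e$ is twisted precisely when its endpoints share a type; once the conventions are pinned down this is a short finite verification, but it is the step that needs care, and it is also what makes the parity bookkeeping above go through. As a sanity check I would test the rule on the smallest non-bipartite gem, $K_4$ with its $3$-edge-colouring, whose regular embedding should come out as $\mathbb{RP}^2$. Finally, I would note a shorter but less self-contained route: by the preceding proposition of \cite{ga81i} a bipartite $\Gamma$ — which then represents an orientable manifold, as recalled above — admits \emph{some} regular embedding with respect to $\varepsilon$ on an orientable surface, and since the regular embedding with respect to a fixed $\varepsilon$ is unique up to homeomorphism, \emph{every} regular embedding of a bipartite $\Gamma$ is orientable; the non-bipartite case is symmetric. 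This argument is quicker but relies on the uniqueness statement, whereas the band-surface argument above proves the proposition from scratch.
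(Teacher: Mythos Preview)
The paper does not supply a proof of this proposition: it is quoted from \cite{ga81i} and used as a black box, so there is nothing in the paper to compare your argument against.

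On its own merits your proposal is correct and is essentially the classical argument. The key structural fact---that in a regular embedding the cyclic order of colors around each vertex disk is forced to be $\varepsilon$ or its reverse---is exactly right, and your ``local twist rule'' (the band $B_e$ on an edge of color $\varepsilon_i$ is untwisted iff its endpoints have opposite type) follows from matching the two bicolored faces $\{\varepsilon_{i-1},\varepsilon_i\}$ and $\{\varepsilon_i,\varepsilon_{i+1}\}$ across $e$, as you say. Once that rule is in hand, the parity bookkeeping is standard: with any choice of local orientations the number of untwisted edges on a cycle $C$ is even (since the type must return to its initial value), so the number of twisted edges on $C$ has the parity of $|C|$, and the ribbon surface is orientable iff every cycle is even, i.e.\ iff $\Gamma$ is bipartite. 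Your sanity check on $K_4$ is apt. The shorter route you mention at the end---combining the existence statement of the preceding cited proposition with uniqueness of the regular embedding for a fixed $\varepsilon$---is also valid and is closer to how the result is usually invoked in the crystallization literature, but it does rely on that uniqueness, which the present paper does not state explicitly.
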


\begin{proposition}[\cite{ga81i}]\label{prop: surface embedding}
A 3-regular colored graph $\Gamma$ represents a closed connected surface $S$ if and only if it embeds regularly on the surface $S$.   
\end{proposition}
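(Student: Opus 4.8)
The statement identifies the surface represented by a $3$-regular colored graph with the surface(s) on which it embeds regularly, so the plan is to set up the standard duality between the triangulation $\mathcal K(\Gamma)$ and regular embeddings of $\Gamma$, using the fact that dualizing a cellular decomposition of a surface returns a cellular decomposition of the \emph{same} surface. Throughout, the key simplification is that for $d=2$ every unordered pair of colors from $\Delta_d=\{0,1,2\}$ is a pair of consecutive colors of some cyclic permutation, so ``regular embedding'' of a $3$-regular colored graph means nothing more than ``each face is bounded by a bi-colored cycle.''

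For the forward implication, assume $\Gamma$ represents $S$, i.e.\ $|\mathcal K(\Gamma)|$ is PL homeomorphic to $S$. I would embed $\Gamma$ into $|\mathcal K(\Gamma)|$ as the dual $1$-skeleton: put a vertex of $\Gamma$ at the barycentre of each triangle $\sigma(u)$, and for each color-$j$ edge $uv$ of $\Gamma$ draw an arc from the barycentre of $\sigma(u)$, across the $1$-face glued opposite the $j$-labelled vertices, to the barycentre of $\sigma(v)$. The faces of this embedding are the vertex stars of $\mathcal K(\Gamma)$: around a vertex $w$ of label $i$, the dual walk successively crosses the $1$-faces incident to $w$, and every such $1$-face is opposite a vertex with label in $\{0,1,2\}\setminus\{i\}$, so the walk alternates between exactly those two colors and is a bi-colored cycle. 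Hence the embedding is regular, and $\Gamma$ embeds regularly on $S$.

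For the converse, assume $\Gamma$ embeds regularly on $S$ with faces that are open disks, each bounded by a bi-colored cycle. Each face uses exactly two colors, so I label it with the third (missing) color $i\in\{0,1,2\}$. Since $\Gamma$ is properly $3$-edge-colored, the three edges at a vertex $u$ carry colors $0,1,2$ and the three corners at $u$ sit on three distinct faces bearing the three labels $0,1,2$; this lets me attach to $u$ a triangle $\sigma'(u)$ whose vertices are these three faces (labelled $0,1,2$) and whose sides are the three edges at $u$ (colored $0,1,2$), with the color-$j$ side opposite the label-$j$ vertex. Gluing the $\sigma'(u)$ along the dual edges --- the edges of $\Gamma$ --- a color-$j$ edge $uv$ identifies the side of $\sigma'(u)$ opposite label $j$ with the side of $\sigma'(v)$ opposite label $j$, endpoints of equal label coinciding. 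By construction this glued complex is exactly $\mathcal K(\Gamma)$; on the other hand it is the cell decomposition of $S$ dual to the given embedding, whence $|\mathcal K(\Gamma)|\cong S$ and $\Gamma$ represents $S$.

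I expect the main obstacle to be pure bookkeeping rather than topology: checking that the missing-color labelling of faces is well defined and that on both triangles abutting a color-$j$ dual edge the color $j$ really lands opposite the label-$j$ vertex, and verifying the reduction of the general definition of regular embedding to the condition that every face be bounded by a bi-colored cycle in the case $d=2$. A point to pin down carefully is that the faces of a regular embedding are genuine disks, so that the dual construction produces an honest cell complex and duality is an involution on surfaces; I would fold this into the meaning of ``embedding on a surface.'' Once these are in place, the two implications are simply the two directions of the single observation that $\Gamma\hookrightarrow|\mathcal K(\Gamma)|$ and passage to the dual triangulation are mutually inverse.
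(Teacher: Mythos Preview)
The paper does not give its own proof of this proposition; it is simply quoted from Gagliardi~\cite{ga81i} as a known result, so there is nothing in the paper to compare your argument against line by line.

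That said, your sketch is the standard duality argument and is correct in substance. The reduction you make at the outset --- that for $d=2$ every pair of colors is a consecutive pair for the unique cyclic permutation $(0,1,2)$, so ``regular embedding'' collapses to ``every face is bounded by a bi-colored cycle'' --- is exactly the right simplification. The forward direction via the dual $1$-skeleton inside $|\mathcal K(\Gamma)|$ and the identification of faces with vertex stars is the classical construction; the converse via the ``missing color'' labelling of faces and the dual triangulation is likewise standard and is essentially how the cited reference proceeds. The bookkeeping points you flag (well-definedness of the missing-color label, faces being open disks so that duality is an honest involution on cell decompositions of the same surface) are the only places needing care, and you have identified them correctly.
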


\begin{proposition}[\cite{cp90}]\label{prop: even}
If a $(d+1)$-regular colored graph representing a closed connected non-orientable PL $d$-manifold (for $d \geq 3$) embeds regularly on a surface $S$, then $S$ must be a non-orientable surface with an even genus.   
\end{proposition}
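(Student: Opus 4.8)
\medskip

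\noindent The plan is to prove the two assertions in turn. That $S$ must be non-orientable is immediate: since $M$ is non-orientable, $\Gamma$ is non-bipartite, so by Proposition~\ref{prop: non-exist embedding} it cannot embed regularly on an orientable surface. It remains to show the genus of $S$ is even, which, since a non-orientable surface of genus $k$ has Euler characteristic $2-k$, amounts to showing that $\chi(S)$ is even. First I would fix notation: let $\varepsilon=(\varepsilon_0,\dots,\varepsilon_d)$ be the cyclic permutation realising the embedding, and observe that $\Gamma$ has an even number of vertices $2n$, since for any two colors the corresponding bicolored subgraph of $\Gamma$ is a disjoint union of even cycles. For a pair of colors $\{a,b\}$ write $g_{ab}$ for the number of $\{a,b\}$-colored cycles of $\Gamma$. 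The faces of the embedding are exactly the cycles of $\Gamma_{\{\varepsilon_i,\varepsilon_{i+1}\}}$, $i\in\mathbb{Z}_{d+1}$, so with $F=\sum_{i\in\mathbb{Z}_{d+1}}g_{\varepsilon_i\varepsilon_{i+1}}$ faces one has $\chi(S)=2n-(d+1)n+F$; thus it suffices to prove $F\equiv(d-1)n\pmod 2$.

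\medskip

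\noindent The heart of the matter is the link structure of $\mathcal{K}(\Gamma)$. For a set $D$ of $h$ colors forming a contiguous arc in the cyclic order $\varepsilon$, each connected component $R$ of $\Gamma_D$ is $h$-regular and is a gem of the link of a $(d-h)$-simplex of $\mathcal{K}(\Gamma)$; since $M$ is a closed PL $d$-manifold this link is the sphere $S^{h-1}$. As $S^{h-1}$ is orientable, $R$ is bipartite, hence by Proposition~\ref{prop: non-exist embedding} the regular embedding of $R$ with respect to the cyclic order induced on $D$ lies on an orientable surface and so has even Euler characteristic. Writing $2m_R$ for the number of vertices of $R$ and $f_R$ for the number of faces of this embedding, evenness of $2m_R-hm_R+f_R$ forces $f_R\equiv hm_R\pmod 2$, and summing over all components $R$ of $\Gamma_D$ (using $\sum_R m_R=n$) gives $\sum_R f_R\equiv hn\pmod 2$.

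\medskip

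\noindent I would apply the previous paragraph twice. With $h=3$ and $D=\{\varepsilon_i,\varepsilon_{i+1},\varepsilon_{i+2}\}$ --- a proper residue because $d\ge3$ --- the faces of each component are the cycles of colors $\{\varepsilon_i\varepsilon_{i+1}\}$, $\{\varepsilon_{i+1}\varepsilon_{i+2}\}$, $\{\varepsilon_i\varepsilon_{i+2}\}$, so the sum yields
\[
g_{\varepsilon_i\varepsilon_{i+1}}+g_{\varepsilon_{i+1}\varepsilon_{i+2}}+g_{\varepsilon_i\varepsilon_{i+2}}\equiv n\pmod 2\qquad(i\in\mathbb{Z}_{d+1}).
\]
With $h=d$ and $D=\Delta_d\setminus\{\varepsilon_k\}$ the induced cyclic order on $D$ is $(\varepsilon_{k+1},\dots,\varepsilon_{k+d})$, so the faces of each component are the cycles of colors $\{\varepsilon_{k+1}\varepsilon_{k+2}\},\dots,\{\varepsilon_{k+d-1}\varepsilon_{k+d}\}$ (these being all consecutive pairs of $\varepsilon$ except the two containing $\varepsilon_k$) together with the ``wrap-around'' pair $\{\varepsilon_{k+d},\varepsilon_{k+1}\}=\{\varepsilon_{k-1},\varepsilon_{k+1}\}$; summing gives
\[
\Bigl(F-g_{\varepsilon_{k-1}\varepsilon_k}-g_{\varepsilon_k\varepsilon_{k+1}}\Bigr)+g_{\varepsilon_{k-1}\varepsilon_{k+1}}\equiv dn\pmod 2 .
\]
Using the first congruence with $i=k-1$ to substitute for $g_{\varepsilon_{k-1}\varepsilon_{k+1}}$ collapses the left-hand side to $F+n$, so $F\equiv(d-1)n\pmod 2$; hence $\chi(S)=2n-(d+1)n+F$ is even (as $(d+1)n+(d-1)n=2dn$), and $S$ has even genus.

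\medskip

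\noindent I expect the main obstacle to be the computation of the faces of the regular embedding of the residue $\Gamma_{\Delta_d\setminus\{\varepsilon_k\}}$ under the \emph{induced} cyclic order, in particular the observation that its wrap-around pair is a distance-$2$ pair of $\varepsilon$ rather than a distance-$1$ pair: this is precisely where the hypothesis $d\ge3$ enters, and precisely what breaks when $d=2$ (then $\Gamma_{\{\varepsilon_i,\varepsilon_{i+1},\varepsilon_{i+2}\}}$ is all of $\Gamma$, and indeed the conclusion fails, e.g.\ for $\mathbb{RP}^{2}$). A second point to be careful with is invoking correctly the standard gem-theoretic fact that the components of $\Gamma_D$, for $D$ a contiguous arc of $h$ colors, are gems of the links of $(d-h)$-simplices of $\mathcal{K}(\Gamma)$, which are $(h-1)$-spheres exactly because $M$ is a manifold.
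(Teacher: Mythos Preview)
The paper does not prove this proposition; it is quoted from \cite{cp90} and used as a black box in the proof of Theorem~\ref{theorem:possible gem}. So there is no ``paper's proof'' to compare against.

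Your argument is correct and self-contained. The two key ingredients are used soundly: (i) for any proper subset $D\subset\Delta_d$ of $h$ colors, each component of $\Gamma_D$ is a gem of an $(h-1)$-sphere (this is the standard link characterisation of gems of closed PL manifolds), hence bipartite, hence its regular embedding---with respect to the cyclic order induced by $\varepsilon$---lies on an orientable surface and has even Euler characteristic; and (ii) the face count of such an induced embedding picks up exactly the consecutive bicolor pairs of the induced cyclic order, including the single ``wrap-around'' pair. Your identification of that wrap-around pair for $D=\Delta_d\setminus\{\varepsilon_k\}$ as the distance-two pair $\{\varepsilon_{k-1},\varepsilon_{k+1}\}$ is the crux, and combining the $h=d$ congruence with the $h=3$ congruence at $i=k-1$ cleanly cancels the unwanted terms. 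The r\^ole of the hypothesis $d\ge3$ is exactly as you say: it guarantees that the three-color arc $\{\varepsilon_i,\varepsilon_{i+1},\varepsilon_{i+2}\}$ is a \emph{proper} subset of $\Delta_d$, so that its components are genuinely gems of $S^2$ rather than of $M$ itself.

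One small sharpening you might make in writing it up: in the $h=3$ case you can be more explicit and note that each component, being a $3$-regular colored gem of $S^2$, actually embeds regularly on $S^2$ itself (by Proposition~\ref{prop: surface embedding}), so its Euler characteristic is exactly $2$, not merely even; this gives $f_R=m_R+2$ directly and makes the congruence $g_{\varepsilon_i\varepsilon_{i+1}}+g_{\varepsilon_{i+1}\varepsilon_{i+2}}+g_{\varepsilon_i\varepsilon_{i+2}}\equiv n\pmod 2$ immediate without appealing to orientability.
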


Let $\Gamma$ be a $(d+1)$-regular colored graph embedded regularly on a surface $S$. Since each face in the embedding is bounded by a bi-colored cycle, every face forms a polygon with an even number of sides (including the possibility of a 2-gon). In this article, we restrict our focus to polygons of length at least 4. Without loss of generality, we assume the color sequence $\varepsilon$ to be $(0, 1, \dots, d)$. We define the face-cycles $P_0, P_1, \dots, P_d$ at a vertex $x$ in the embedding of $\Gamma$ on $S$ as the consecutive faces incident to $x$, where each polygon $P_i$ is bounded by a bi-colored cycle of colors $i$ and $i+1$, for $0 \leq i \leq d$, with the condition that $d+1 = 0$.
	   
\begin{definition}\label{def:semiequivelar}
{\rm
Let $\Gamma$ be a  $(d+1)$-regular colored graph embedded regularly on a surface $S$. If the face-cycles $P_0, P_1, \dots, P_d$ at every vertex are of the same type  in the embedding of $\Gamma$ on the surface $S$, then $\Gamma$ is called a {\it semi-equivelar graph embedded regularly on  $S$}. Furthermore, if $\Gamma$ represents a $d$-manifold $M$, then it is referred to as a {\it semi-equivelar gem of $M$.}}

{\rm If there are $n_i$ adjacent $p_i$-gons and $p$ is the total number of vertices in $\Gamma$, then $\Gamma$ is called a {\it $[(p_0^{n_0},p_1^{n_1},\dots,p_m^{n_m});p]$-type semi-equivelar graph embedded regularly on  $S$}. It is important to note that $p_0\neq p_m$ and $p_i$ may be equal to $p_j$  if $|i-j|\geq 2$.}

\end{definition}

\section{Main results}
In \cite{bb24}, the authors examined all semi-equivelar gems that are  embedded regularly on surfaces with non-negative Euler characteristics. This article will focus on the surface with Euler characteristic $-1$, i.e., the surface $\#_3 \mathbb{RP}^2$. The following lemma lists all the possible types of semi-equivelar graphs embedded regularly  on the surface with Euler characteristic $-1$, i.e., the surface $\#_3 \mathbb{RP}^2$.

\begin{lemma}\label{lemma:possible}
If $\Gamma$ is a semi-equivelar graph embedded regularly on the surface $S$ with $\chi(S)=-1$, then $\Gamma$ is one of the following types: $[(4^5);4],\ [(4^3,8);8],\ [(4^3,6);12],\ [(8^3);8],\ [(6^2,8);24],\ $ $[(6^2,12);12],\ [(10^2,4);20],\ [(12^2,4);12],\ [(4,6,14);84],\ [(4,6,16);48],\ [(4,6,18);36],\ $\\
$ [(4,6,24);24],\ [(4,8,10);40],\ [(4,8,12);24]$, or $[(4,8,16);16].$ 

\end{lemma}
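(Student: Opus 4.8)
The plan is to run a standard Euler-characteristic counting argument on the semi-equivelar structure, combined with the parity and divisibility constraints forced by the coloring. Suppose $\Gamma$ is $(d+1)$-regular with $p$ vertices, embedded regularly on $S$ with $\chi(S)=-1$, and with vertex type $(p_0^{n_0},\dots,p_m^{n_m})$. Since each vertex lies on exactly $d+1$ face-cycles $P_0,\dots,P_d$ (one per consecutive color pair), the total count $\sum_i n_i = d+1$, so $d+1 \in \{3,4,5,\dots\}$ and I first want to bound $d$. A $p_j$-gon contributes $p_j$ vertex-incidences, each vertex supplies $n_j$ of them to $p_j$-gons, so the number of $p_j$-faces is $F_j = p n_j / p_j$, and the total number of faces is $F = p\sum_j n_j/p_j$. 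Each vertex has degree $d+1$, so $E = p(d+1)/2$, and $V=p$. Plugging into $V-E+F = \chi(S) = -1$ gives the master equation
\begin{equation}\label{eq:master}
p\left(1 - \frac{d+1}{2} + \sum_{j=0}^{m}\frac{n_j}{p_j}\right) = -1 .
\end{equation}
Since $p\geq 1$, the bracketed quantity is negative, which immediately forces $1 - (d+1)/2 + \sum_j n_j/p_j < 0$; because every face has at least $4$ sides, $\sum_j n_j/p_j \le (d+1)/4$, giving $1 - (d+1)/4 < 0$, i.e. $d+1 > 4$ is impossible to rule out this crudely — but combined with $p\ge 1$ one gets that the bracket lies in $[-1,0)$, and since it equals $-1/p$ it is a negative reciprocal of a positive integer. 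This is the engine of the whole proof.

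Next I would carry out the finite case analysis on $d+1 \in \{3,4,5\}$ (larger $d+1$ is eliminated because then $1-(d+1)/2 + (d+1)/4 = 1 - (d+1)/4 \le 1 - 6/4 < -1/p$ for all $p\ge 1$ once $d+1\ge 6$, using the minimum face size $4$). For each admissible $d+1$, write the bracket as $-1/p$ and solve for the multiset $\{p_j\}$ of even integers $\ge 4$ with exactly $d+1$ terms: for $d+1=5$ all five faces must be $4$-gons, forcing type $(4^5)$ with $p=4$ from \eqref{eq:master}; for $d+1=4$ one gets $(4^3,p_3)$ and $(4^2,p_2,p_3)$-style solutions and the equation pins down $p$ and hence finitely many $p_j$; for $d+1=3$, \eqref{eq:master} reads $p(1/p_0+1/p_1+1/p_2 - 1/2) = -1$, i.e. $1/p_0+1/p_1+1/p_2 = 1/2 - 1/p$, and I would enumerate all triples of even integers $\ge 4$ satisfying this for some positive integer $p$, discarding those with $p_0=p_2$ (by the convention in Definition~\ref{def:semiequivelar}) and those that do not give an integer $p$. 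The bipartite-versus-orientable constraint (Proposition~\ref{prop: non-exist embedding}, Proposition~\ref{prop: even}) and the fact that $\chi = -1$ corresponds to the non-orientable surface $\#_3\mathbb{RP}^2$ are used to keep only genuinely realizable types, though at the level of this lemma (which only lists \emph{possible} types) the arithmetic enumeration is the substance.

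The main obstacle is the bookkeeping in the $d+1=3$ case: the Diophantine condition $1/p_0+1/p_1+1/p_2 = 1/2 - 1/p$ with all $p_i$ even, $\ge 4$, and $p_0 \ne p_2$ has a genuinely large but finite solution set, and one must be careful that each solution yields a \emph{consistent} semi-equivelar pattern — in particular that $F_j = p n_j / p_j$ is a positive integer for every $j$ (here each $n_j=1$), that the implied number of faces and edges are integers, and that no smaller-face degenerate configurations sneak in. I expect the argument to proceed by first fixing $p_0$ (the smallest face size, which must be $4$, $6$, $8$, $10$, or $12$, since $3/p_0 > 1/2 - 1/p$ forces $p_0 < 6$ only when... — more precisely $p_0 \le 12$ follows from $3/p_0 \ge 1/p_0+1/p_1+1/p_2 > 1/2 - 1/p \ge 1/2 - \text{something}$, needing a lower bound on $p$ which itself comes from $p\ge p_2 \ge \dots$), then for each $p_0$ solving the resulting two-variable equation for $(p_1,p_2)$. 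Carefully organizing this case tree, and verifying the integrality of $p$ in each branch, is where all the work lies; the output is exactly the fifteen types listed.
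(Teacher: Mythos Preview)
Your plan is essentially the paper's proof: the same Euler-characteristic equation (your \eqref{eq:master} is the paper's Equation~\eqref{1}), the same reduction to $d+1\in\{3,4,5\}$, and the same case-by-case Diophantine enumeration using evenness of $p$ and the integrality conditions $p_j\mid p\,n_j$. One small slip to fix: your elimination of $d+1\ge 6$ via ``$1-(d+1)/4\le -1/2<-1/p$ for all $p\ge 1$'' fails for $p\le 2$; the paper instead uses $p\ge 4$ (each face-cycle already has at least four vertices) to get $d+1\le 4+4/p\le 5$ directly.
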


\begin{proof}
Let $\Gamma$ be a $[({p_0},{p_1},\dots,{p_d});p]$-type semi-equivelar graph embedded regularly on $S$, where $p_i \geq 4$. Clearly, $p_i$ is even. Let $q_0,q_1, \dots, q_l$ be the lengths of the polygons of different sizes, where $q_j=p_i$, for some $0\leq i \leq d$. Let $k_i$ be the number of $q_i$-polygons. Thus, $\sum_{i=0}^l{k_i}=d+1$. Let $V, E$, and $F$ denote the number of vertices, edges, and faces in the regular embedding of $\Gamma$ on $S$, respectively. Then $V=p$, $E=p (d+1)/2$, and $F=p(\frac{k_0}{q_0}+\frac{k_1}{q_1}+ \dots +\frac{k_l}{q_l})$. Thus, we have
\begin{align}
\Big(1-\frac{(d+1)}{2}+\frac{k_0}{q_0}+\frac{k_1}{q_1}+ \dots +\frac{k_l}{q_l}\Big)=\frac{\chi(S)}{p}. \label{1}
\end{align}
Since $q_i \geq 4$, we have $k_i/q_i \leq k_i/4$, which further implies
\begin{equation} \label{2}
d+1 \leq 4-\frac{4 \chi(S)}{p}=4+\frac{4}{p}. 
\end{equation}

\noindent Since $p\geq 4, \ (d+1)\leq 5.$ 
		
\noindent \textbf {Case 1.} Let $(d+1)=5$. Equation \eqref{2} gives $p=4$, and hence $p_i=4$ for all $0 \leq i \leq 4$. Thus, a $5$-regular colored semi-equivelar graph embedded regularly on the surface $\#_3 \mathbb{RP}^2$ is of the type $[(4^5);4]$.

\noindent \textbf {Case 2.} Let $(d+1)=4$. In this case, $(k_0,k_1,\dots,k_l)=(4),(3,1),(2,2),(2,1,1)$, or $(1,1,1,1)$.
Also, Equation \eqref{1} reduces to 
\begin{align}\label{3}
\frac{k_0}{q_0}+\frac{k_1}{q_1}+ \dots +\frac{k_l}{q_l}=1-\frac{1}{p}.
\end{align}

Consider $(k_0)=(4)$. From Equation \eqref{3}, it is clear that $q_0 \ne 4$, and when $q_0\geq 6$, we get $p\leq 3$, which is not possible. So, for $(k_0)=(4)$, there is no semi-equivelar graph embedded regularly on the surface with Euler characteristic $-1$.

For $(k_0,k_1)=(3,1)$, Equation \eqref{3} implies $\frac{3}{q_0}+\frac{1}{q_1}+\frac{1}{p}=1.$ First, let $q_0 < q_1$. If $q_0 \geq 6$, then $1=\frac{3}{q_0}+\frac{1}{q_1}+\frac{1}{p}\leq \frac{3}{6}+\frac{1}{8}+\frac{1}{8}=\frac{3}{4}< 1$. Therefore, $q_0\geq 6$ is not possible. Now, if $q_0=4$, we get $\frac{1}{q_1}+\frac{1}{p}=\frac{1}{4}$. Putting $q_1=6,8$ in this equation, we get $p=12,8$, respectively. Thus, we got two embedding types $[(4^3,6);12]$ and $[(4^3,8);8]$. Again, using the same drill for $q_1\geq 10$, one gets $\frac{1}{4}\leq \frac{1}{5}$, which implies $q_1\geq 10$ is not possible. Now, let $q_1 < q_0$. If $q_1 \geq 6$, then $1=\frac{3}{q_0}+\frac{1}{q_1}+\frac{1}{p}\leq \frac{3}{8}+\frac{1}{6}+\frac{1}{8}=\frac{2}{3}< 1$. Therefore, $q_1\geq 6$ is not possible. So, we are only left with the possibility $q_1=4$. Taking $q_0=6$, we get $p=4<q_0$, which is a contradiction. And, for $q_0\geq 8$, we get $1\leq \frac{3}{8}+\frac{1}{4}+\frac{1}{8}= \frac{3}{4}$, which implies that for $q_1 < q_0$, we do not have any semi-equivelar graph embedded regularly on the surface with Euler characteristic $-1$.

For $(k_0,k_1)=(2,2)$, Equation \eqref{3} reduces to $\frac{2}{q_0}+\frac{2}{q_1}+\frac{1}{p}=1.$ We can assume $q_0<q_1$. First, let $q_0=4$ and $q_1=6$, then we get $p=6$ from the preceding equation. But this is not possible, since $q_0=4$ does not divide $p=6$. Now, letting $q_0\geq 6$, we get $1=\frac{2}{q_0}+\frac{2}{q_1}+\frac{1}{p}\leq \frac{2}{6}+\frac{2}{8}+\frac{1}{8}=\frac{17}{24}<1$. So, $q_0\geq 6$ is not possible. Hence, for $(k_0,k_1)=(2,2)$, we do not get any semi-equivelar graph embedded regularly on the surface $\#_3 \mathbb{RP}^2.$

For $(k_0,k_1,k_2)=(2,1,1)$, Equation \eqref{3} reduces to $\frac{2}{q_0}+\frac{1}{q_1}+\frac{1}{q_2}+\frac{1}{p}=1.$ Let $q_0$ be the smallest, then preceding equation implies $1=\frac{2}{q_0}+\frac{1}{q_1}+\frac{1}{q_2}+\frac{1}{p}\leq \frac{2}{4}+\frac{1}{6}+\frac{1}{8}+\frac{1}{8}=\frac{11}{12}<1$, leading us to a contradiction. So, for this $(k_0,k_1,k_2)=(2,1,1)$ also, we do not get any semi-equivelar graph embedded regularly on $S$.

For $(k_0,k_1,k_2,k_3)=(1,1,1,1)$, Equation \eqref{3} reduces to $\frac{1}{q_0}+\frac{1}{q_1}+\frac{1}{q_2}+\frac{1}{q_3}+\frac{1}{p}=1.$ We can assume $q_0$ to be the smallest. Then preceding equation implies $1=\frac{1}{q_0}+\frac{1}{q_1}+\frac{1}{q_2}+\frac{1}{q_3}+\frac{1}{p}\leq \frac{1}{4}+\frac{1}{6}+\frac{1}{8}+\frac{1}{10}+\frac{1}{10}=\frac{89}{120}<1$, which is a contradiction. So, for this $(k_0,k_1,k_2,k_3)=(1,1,1,1)$ as well, we do not have any semi-equivelar graph embedded regularly on the surface $\#_3 \mathbb{RP}^2.$

Thus, a $4$-regular colored semi-equivelar graph embedded regularly on the surface $\#_3 \mathbb{RP}^2$ is of the type $[(4^3,6);12]$ or $[(4^3,8);8].$

\noindent \textbf {Case 3.} Let $(d+1)=3$. So, in this case, $(k_0,k_1,\dots,k_l)=(3),(2,1)$, or $(1,1,1)$, and Equation \eqref{1} implies that
\begin{equation}\label{4}
\frac{k_0}{q_0}+\frac{k_1}{q_1}+ \dots +\frac{k_l}{q_l}=\frac{1}{2}-\frac{1}{p}.
\end{equation}

For $(k_0)=3,$ Equation \eqref{4} gives $\frac{3}{q_0}+\frac{1}{p}=\frac{1}{2}.$ Clearly, $q_0=4,6$ is not possible. Putting $q_0=8$ in the equation, we get $p=8$. So, $[(8^3);8]$ is a possible embedding type. Now, if $q_0\geq 10$, then $\frac{1}{2}=\frac{3}{q_0}+\frac{1}{p} \leq \frac{3}{10}+\frac{1}{10}=\frac{4}{10}< \frac{1}{2}$, which is absurd. So, for $(k_0)=3,$ the only possible embedding type is $[(8^3);8]$.  

For $(k_0,k_1)=(2,1)$, Equation \eqref{4} implies
\begin{equation}\label{5}
\frac{2}{q_0}+\frac{1}{q_1}+\frac{1}{p}=\frac{1}{2}.    
\end{equation}
Clearly, $q_0=4$ is not possible. So, letting $q_0=6$, Equation \eqref{5} implies $\frac{1}{q_1}+\frac{1}{p}=\frac{1}{6}$. From this equation, it is clear that $q_1$ is not equal to $4$ and is not greater than or equal to $14$, else $\frac{1}{6}\leq\frac{1}{7}$, which is absurd. Putting $q_1=8,10,12$ in the above equation, we get $p=24,15,12$, respectively. Since we are considering only regular colored graphs, $p$ is even. Thus, $p=15$ is discarded. Hence, when $q_0=6$, we get two possible embedding types of semi-equivelar graphs $[(6^2,8);24]$ and $[(6^2,12);12].$ Now, if $q_0=8$ is fixed, then Equation \eqref{5} reduces to $\frac{1}{q_1}+\frac{1}{p}=\frac{1}{4}$. Clearly, this equation implies that $q_1\neq 4$. Putting $q_1=6$ in the above equation, we get $p=12$. But this is not possible, since $q_0=8$ does not divide $12$. Also, $q_1\geq 10$ is not possible, otherwise $\frac{1}{4}=\frac{1}{q_1}+\frac{1}{p}\leq \frac{1}{5}$, which is obviously not possible. Thus, for $q_0=8$, we do not have any possible embedding type. Again, let us fix $q_0=10$. Then, Equation \eqref{5} implies $\frac{1}{q_1}+\frac{1}{p}=\frac{3}{10}.$ Clearly, from this equation, we get that $q_1 \geq 6$ is not possible. Because $\frac{3}{10}=\frac{1}{q_1}+\frac{1}{p}\leq \frac{1}{6}+\frac{1}{10}=\frac{8}{30}$, leads us to a contradiction. Putting $q_1=4$, we get $p=20$. Thus, when we fix $q_0=10$, we get a possible embedding type $[(10^2,4);20]$. In a similar way, one can show that if we fix $q_0=12$, then $[(12^2,4);12]$ is the unique possible embedding type.
Hence, for $(k_0,k_1)=(2,1)$, we get four possible embedding types of semi-equivelar graphs. These are $[(6^2,8);24],\ [(6^2,12);12],\ [(10^2,4);20]$, and $[(12^2,4);12]$.

For $(k_0,k_1,k_2)=(1,1,1)$, Equation \eqref{4} implies
\begin{equation}\label{6}
\frac{1}{q_0}+\frac{1}{q_1}+\frac{1}{q_2}+\frac{1}{p}=\frac{1}{2}.
\end{equation}
Without loss of generality, we assume $q_0 < q_1 < q_2$. Equation \eqref{6} implies that $q_0\geq6$ is not possible, as if it is, then $\frac{1}{2}=\frac{1}{q_0}+\frac{1}{q_1}+\frac{1}{q_2}+\frac{1}{p}\leq \frac{1}{6}+\frac{1}{8}+\frac{1}{10}+\frac{1}{10}=\frac{59}{120}$, which is absurd. So, we get $q_0=4$, and Equation \eqref{6} further implies 
\begin{equation}\label{7}
\frac{1}{q_1}+\frac{1}{q_2}+\frac{1}{p}=\frac{1}{4}.
\end{equation}
The above equation implies that $q_1\geq 10$ and $q_2\geq 14$ is not possible, else $\frac{1}{4}=\frac{1}{q_1}+\frac{1}{q_2}+\frac{1}{p}\leq \frac{1}{10}+\frac{1}{14}+\frac{1}{14} \leq \frac{17}{70}$. Putting $q_1=10$ and $q_2=12$ in Equation \eqref{7}, we get $p=15$. Since we consider only regular colored graphs, $p$ must be even. So, $p=15$ is not possible, and thus, we get $q_1\leq 8.$ Now, let us first assume $q_1=6$, then Equation \eqref{7} implies $\frac{1}{q_2}+\frac{1}{p}=\frac{1}{12}$. Clearly, from this equation, $14 \leq q_2 \leq 24$. Putting $q_2=14,16,18,20,22,24$ in the above equation, we get $p=84,48,36,30,\frac{132}{5},24$, respectively. We discard $p=30$ because $20$ does not divide $30$, and $p=\frac{132}{5}$ is obviously not possible. So, fixing $q_0=4$ and $q_0=6$, we get four possible types $[(4,6,14);84],\ [(4,6,16);48],\ [(4,6,18);36]$, and $[(4,6,24);24]$. Now, let $q_1=8$, then Equation \eqref{7} implies $\frac{1}{q_2}+\frac{1}{p}=\frac{1}{8}.$ This implies $10 \leq q_2 \leq 16$. Putting $q_2=10,12,14,16$ in the above equation, we get $p=40,24,\frac{56}{3},16$, respectively. Discarding $p=\frac{56}{3}$, we get three possible embedding types $[(4,8,10);40],\ [(4,8,12);24]$, and $[(4,8,16);16]$.

Thus, a $3$-regular colored semi-equivelar graph
embedded regularly on the surface $\#_3 \mathbb{RP}^2$ is one of the following twelve types: $[(8^3);8],\ [(6^2,8);24],\ [(6^2,12);12],\ [(10^2,4);20],\ [(12^2,4);12],\ $ $[(4,6,14);84],\ [(4,6,16);48],\ [(4,6,18);36],\ [(4,6,24);24],\ [(4,8,10);40],\ [(4,8,12);24]$, and $[(4,8,16);16]$.		
\end{proof}

\begin{theorem}\label{theorem:possible gem}
Let $\Gamma$ be a semi-equivelar gem embedded regularly on the surface with Euler characteristic $-1$. Then, $\Gamma$ represents $\#_3 \mathbb{RP}^2$, and  $\Gamma$ is one of the following twelve types: $[(8^3);8],\ [(6^2,8);24],\ [(6^2,12);12],\ [(10^2,4);20],\ [(12^2,4);12],\ $ $[(4,6,14);84],\ [(4,6,16);48]$, $[(4,6,18);36],\ [(4,6,24);24],\ [(4,8,10);40],\ [(4,8,12);24]$, and $[(4,8,16);16]$.		
\end{theorem}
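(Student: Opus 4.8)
The plan is to take Lemma~\ref{lemma:possible} as the starting point --- it already cuts the possibilities down to the fifteen listed embedding types --- and then to eliminate the three types that are not $3$-regular, namely $[(4^5);4]$, $[(4^3,8);8]$, and $[(4^3,6);12]$. The key observation is a parity fact about the host surface: a closed orientable surface has even Euler characteristic, so the surface $S$ with $\chi(S)=-1$ must be non-orientable, hence $S=\#_3 \mathbb{RP}^2$, which has \emph{odd} non-orientable genus $3$. I would organize the argument so that the $3$-regular case is handled by one proposition and the higher-regularity cases by two others.

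For the case $(d+1)\ge 4$ (so $d\ge 3$): here $\Gamma$ is, by hypothesis, a gem of a closed connected PL $d$-manifold $M$. If $M$ is orientable, then $\Gamma$ is bipartite, and Proposition~\ref{prop: non-exist embedding} forbids a bipartite graph from embedding regularly on the non-orientable surface $\#_3 \mathbb{RP}^2$, a contradiction. If $M$ is non-orientable, then $\Gamma$ is non-bipartite; since $d\ge 3$, Proposition~\ref{prop: even} forces the host surface of any regular embedding of $\Gamma$ to be non-orientable of \emph{even} genus, contradicting the fact that $\#_3 \mathbb{RP}^2$ has genus $3$. Either way we reach a contradiction, so a semi-equivelar gem embedded regularly on the surface of Euler characteristic $-1$ can be neither $5$-regular nor $4$-regular; this discards precisely the types $[(4^5);4]$, $[(4^3,8);8]$, and $[(4^3,6);12]$ of Lemma~\ref{lemma:possible}.

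For the case $d=2$ ($\Gamma$ a $3$-regular colored graph), Proposition~\ref{prop: surface embedding} applies verbatim: a $3$-regular colored graph represents a closed connected surface if and only if it embeds regularly on that surface. Since $\Gamma$ embeds regularly on the surface of Euler characteristic $-1$, namely $\#_3 \mathbb{RP}^2$, it represents $\#_3 \mathbb{RP}^2$. Together with the twelve $3$-regular types that survive Lemma~\ref{lemma:possible}, this gives exactly the list in the statement, completing the proof. (Consistently, $\Gamma$ is then non-bipartite, matching Proposition~\ref{prop: non-exist embedding} and the non-orientability of $\#_3 \mathbb{RP}^2$.)

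I do not anticipate a genuine obstacle; the only point needing a line of care is that a semi-equivelar gem is, by Definition~\ref{def:semiequivelar}, a semi-equivelar graph that represents a manifold, so that in the $4$- and $5$-regular cases the hypotheses of Propositions~\ref{prop: non-exist embedding} and~\ref{prop: even} --- a gem of a closed connected PL $d$-manifold with $d\ge 3$ --- are literally met, after which the proof is just bookkeeping against the parity of the genus of $\#_3 \mathbb{RP}^2$.
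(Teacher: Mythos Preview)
Your proposal is correct and follows essentially the same approach as the paper: use Proposition~\ref{prop: non-exist embedding} to force non-orientability of the represented manifold, then Proposition~\ref{prop: even} to exclude $d\ge 3$ (hence the $4$- and $5$-regular types), and finally Proposition~\ref{prop: surface embedding} to conclude that the $3$-regular gem represents $\#_3\mathbb{RP}^2$, leaving exactly the twelve types from Lemma~\ref{lemma:possible}. The paper's proof is just a terser version of your case analysis.
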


\begin{proof}
Let $\Gamma$ be a gem representing a closed connected PL $d$-manifold $M$, which is  embedded regularly on the surface with Euler characteristic $-1$. By Proposition \ref{prop: non-exist embedding}, it follows that $M$ must be a non-orientable manifold. Furthermore, Proposition \ref{prop: even} implies that $d=2$. Consequently, by Proposition \ref{prop: surface embedding}, $M$ is the surface $\#_3 \mathbb{RP}^2$. Now, the result follows from Lemma \ref{lemma:possible}.    
\end{proof}

\begin{remark}\label{remark:non-existence}
{\rm There are no semi-equivelar gems of types $(4^5)$, $(4^3,8)$, and $(4^3,6)$ that embed regularly on the surface  with Euler characteristic $-1$.  For an example, Figure \ref{fig:13} depicts the unique semi-equivelar graph of type $(4^5)$ with four vertices that embeds regularly on the surface with Euler characteristic $-1$. However, it is not a gem, as it represents a topological space that is not a manifold.}
\end{remark}

\begin{figure}[h!]
\tikzstyle{ver}=[]
\tikzstyle{vert}=[circle, draw, fill=black!100, inner sep=0pt, minimum width=2pt]
\tikzstyle{verti}=[circle, draw, fill=black!100, inner sep=0pt, minimum width=4pt]
\tikzstyle{edge} = [draw,thick,-]
    \centering
\begin{tikzpicture}[scale=0.4]

\begin{scope}
\foreach \x/\y/\z in
{6.5/5/1,2/6/2,-2/6/3,-6.5/5/4,-8/1.5/5,-8/-1.5/6,-6.5/-5/7,-2/-6/8,2/-6/9,6.5/-5/10,8/-1.5/11,8/1.5/12}
{\node[vert] (\z) at (\x,\y){};}

\foreach \x/\y/\z in
{7/5.5/x_1,2/6.5/x_2,-2/6.5/x_3,-7/5.5/x_4,-8.7/1.5/x_5,-8.7/-1.5/x_6,-7/-5.5/x_4,-2/-6.5/x_2,2/-6.5/x_3,7/-5.5/x_1,8.7/-1.5/x_6,8.7/1.5/x_5}
{\node[ver] (\z) at (\x,\y){$\z$};}

\foreach \x/\y/\z in
{5/6/1',-5/6/2',-8/4/3',-8/-4/4',-5/-6/5',5/-6/6',8/-4/7',8/4/8'}
{\node[verti] (\z) at (\x,\y){};}

\foreach \x/\y/\z in
{5/6.5/a,-5/6.5/a,-8.5/4/b,-8.5/-4/b,-5/-6.5/a,5/-6.5/a,8.5/-4/b,8.5/4/b}
{\node[ver] (\z) at (\x,\y){$\z$};}

\end{scope}

\begin{scope}[shift={(0,0)}]
\foreach \x/\y in {45/x1,135/x2,225/x3,315/x4}
{\node[vert] (\y) at (\x:3){};}

\foreach \x/\y in {45/v_1,135/v_2,225/v_3,315/v_4}
{\node[ver] at (\x:2.2){$\y$};}
\end{scope}

\foreach \x/\y in {1/1',1'/2,2/3,3/2',2'/4,4/3',3'/5,5/6,6/4',4'/7,7/5',5'/8,8/9,9/6',6'/10,10/7',7'/11,11/12,12/8',8'/1}
{\path[edge,dotted] (\x) -- (\y);}

\foreach \x/\y in {x1/x2,x2/x3,x3/x4,x4/x1}
{\path[edge] (\x) -- (\y);}

\foreach \x/\y in {x1/x2,x3/x4}
{{\draw [line width=3pt, line cap=round, dash pattern=on 0pt off 1.3\pgflinewidth]  (\x) -- (\y);}}

\foreach \x/\y in {x1/2,x2/3,x3/8,x4/9}
{\path[edge,dashed] (\x) -- (\y);}

\foreach \x/\y in {x1/1,x2/4,x3/7,x4/10}
{\draw[decorate,decoration={snake, amplitude=1pt, segment length=8pt}] (\x) -- (\y);}

\foreach \x/\y in {x1/12,x2/5,x3/6,x4/11}
{\draw[line width=2pt, line cap=rectangle, dash pattern=on 1pt off 1] (\x) -- (\y);}

\begin{scope} [scale=1, shift = {(-9, -7.5)}]
\foreach \x/\y/\z in {1/-0.5/0,5/-0.5/1,9/-0.5/2,13/-0.5/3,17/-0.5/4}
{\node[ver] () at (\x,\y){$\z$};}
\path[edge] (0,0) -- (2,0);
\path[edge] (4,0) -- (6,0);
\draw [line width=3pt, line cap=round, dash pattern=on 0pt off 1.3\pgflinewidth]  (4,0) -- (6,0);
\path[edge, dashed] (8,0) -- (10,0);
\draw[decorate,decoration={snake, amplitude=1pt, segment length=8pt}] (12,0) -- (14,0);
\draw[line width=2pt, line cap=rectangle, dash pattern=on 1pt off 1] (16,0) -- (18,0);

\end{scope}  

 \end{tikzpicture}

 \caption{Only possible semi-equivelar graph of type $(4^5)$ embedded regularly on $\#_{3} \mathbb{RP}^2$.}  \label{fig:13}
\end{figure}

\begin{theorem}\label{thm:construction}
For each of the following types: $(8^3), (6^2,8), (6^2,12), (10^2,4), (12^2,4),$ $ (4,6,14),$ $ (4,6,16), (4,6,18), (4,6,24), (4,8,10), (4,8,12),$ or $(4,8,16)$, there exists a semi-equivelar gem that embeds regularly on the surface with Euler characteristic $-1$. Further, each of the gems represents the surface $\#_3 \mathbb{RP}^2$.
\end{theorem}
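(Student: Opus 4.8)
The plan is to prove the theorem by explicit construction, one type at a time. For a fixed type $[(p_0,p_1,p_2);p]$ I would exhibit a concrete $3$-regular multigraph $\Gamma$ on $p$ vertices together with a proper edge-coloring $\gamma\colon E(\Gamma)\to\{0,1,2\}$ --- equivalently, three perfect matchings $M_0,M_1,M_2$ partitioning $E(\Gamma)$ --- and then check the following finite list: (i) $\Gamma$ is connected; (ii) for each pair of colors $\{i,i+1\}$ (indices mod $3$) every bicolored cycle of $M_i\cup M_{i+1}$ has one and the same length $\ell_i$; (iii) $(\ell_0,\ell_1,\ell_2)$ is, after relabeling the three colors if necessary, a cyclic rotation of $(p_0,p_1,p_2)$. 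Since every $p_i\geq 4$, item (ii) already forbids bicolored $2$-gons, so $\Gamma$ has no parallel bicolored edges and (being properly edge-colored, hence loopless) is a genuine gem.

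Granting such a $\Gamma$, the conclusion is automatic and uniform across the twelve cases. By Proposition~\ref{prop: surface embedding}, $\Gamma$ represents a closed connected surface $S$ and embeds regularly on $S$; the faces of this embedding are precisely the bicolored cycles, so by (ii) their number is $F=\tfrac{p}{\ell_0}+\tfrac{p}{\ell_1}+\tfrac{p}{\ell_2}$, whence $\chi(S)=V-E+F=p\bigl(1-\tfrac32+\tfrac1{\ell_0}+\tfrac1{\ell_1}+\tfrac1{\ell_2}\bigr)$. For each of the twelve triples $(p;p_0,p_1,p_2)$ this evaluates to $-1$; this is precisely the arithmetic already performed in the proof of Lemma~\ref{lemma:possible}, now run backwards. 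As $\chi(S)=-1$ is odd, $S$ is non-orientable (equivalently, $\Gamma$ is non-bipartite), and the only non-orientable closed surface of Euler characteristic $-1$ is $\#_3\mathbb{RP}^2$. Finally, by (ii) the face-cycles $P_0,P_1,P_2$ at every vertex have lengths $\ell_0,\ell_1,\ell_2$, hence are of one common type throughout the embedding; so $\Gamma$ is a semi-equivelar gem of $\#_3\mathbb{RP}^2$ of the prescribed type (one could equally invoke Theorem~\ref{theorem:possible gem} for the identification of the surface). Thus the whole theorem reduces to producing the twelve graphs.

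For the eight types with few vertices --- $[(8^3);8]$, $[(12^2,4);12]$, $[(6^2,12);12]$, $[(4,8,16);16]$, $[(10^2,4);20]$, $[(6^2,8);24]$, $[(4,8,12);24]$, $[(4,6,24);24]$ --- I would simply display $\Gamma$ by a labeled picture (or list the three matchings and trace the $\{0,1\}$-, $\{1,2\}$- and $\{0,2\}$-cycles), so that items (i)--(iii) are verified by direct inspection. The delicate part is the four types with many vertices, $[(4,6,18);36]$, $[(4,8,10);40]$, $[(4,6,16);48]$ and above all $[(4,6,14);84]$, for which drawing $\Gamma$ and checking all its bicolored cycles by hand is impractical. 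For these I would build $\Gamma$ with a (nearly) free cyclic symmetry $\mathbb{Z}_n\leq\operatorname{Aut}(\Gamma,\gamma)$, realizing it as a $\mathbb{Z}_n$-covering (voltage) graph of a small base graph for some $n\mid p$, chosen so that the $\mathbb{Z}_n$-action permutes the $4$-gons, the $6$- or $8$-gons, and the largest faces in a few controlled orbits. Then (i) becomes a connectedness condition on the voltages, and (ii)--(iii) need only be checked on a single fundamental domain and are then propagated by symmetry.

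I expect this last step to be the main obstacle: pinning down a voltage assignment --- equivalently, a symmetric gluing pattern --- that simultaneously forces all three target face lengths. It seems cleanest to seek a single parametrized template for the types $(4,6,2m)$ with $m\in\{7,8,9,12\}$, and another for the types $(4,8,2m)$ with $m\in\{5,6,8\}$, so that each remaining case becomes a routine specialization of a uniform construction rather than an ad hoc search.
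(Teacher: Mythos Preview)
Your overall strategy---produce an explicit $3$-regular colored graph for each type, check that every bicolored component has the required length, then read off $\chi=-1$ and hence $\#_3\mathbb{RP}^2$ via Proposition~\ref{prop: surface embedding}---is correct and is exactly the framework the paper uses. The Euler-characteristic bookkeeping and the identification of the surface are carried out the same way (the paper phrases it via Theorem~\ref{theorem:possible gem}).

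Where you diverge is in the execution for the large cases. You anticipate that $[(4,6,14);84]$, $[(4,6,16);48]$, $[(4,6,18);36]$, $[(4,8,10);40]$ will require a voltage/covering construction with an imposed $\mathbb{Z}_n$-symmetry, and you flag this as the ``main obstacle.'' The paper does nothing of the sort: it simply draws all twelve graphs by hand, including the $84$-vertex one, as an embedding in a single $2$-cell (a polygon with boundary identifications giving $\#_3\mathbb{RP}^2$), and then lists the bicolored faces one by one. So the obstacle you foresee is not there; brute force suffices, and the paper's proof is really twelve pictures plus bookkeeping. Your covering-space approach would be more systematic and might yield a uniform template for the $(4,6,2m)$ and $(4,8,2m)$ families, but as it stands your proposal is a plan rather than a proof: you have not actually produced any of the twelve graphs, and the voltage assignments you hope to find are left unspecified. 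To complete the argument you must either carry out that search or, as the paper does, simply exhibit the twelve gems directly.
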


\begin{proof}
 In Figures \ref{fig:1} through \ref{fig:12}, we present a CW-complex structure of a surface with precisely one 2-cell. The boundary of the 2-cell contains at most seven 0-cells from the set $\{a, b, c, d, e, f, g\}$. The 1-cells are depicted by the dotted lines on the boundary of the 2-cell, with the identification of two 1-cells in a manner that the $x_i$'s are being identified. We will now provide a detailed description of each figure. Recall that every 3-regular colored graph represents a closed connected surface.

\begin{figure}[h!]
\tikzstyle{ver}=[]
\tikzstyle{vert}=[circle, draw, fill=black!100, inner sep=0pt, minimum width=2pt]
\tikzstyle{verti}=[circle, draw, fill=black!100, inner sep=0pt, minimum width=4pt]
\tikzstyle{edge} = [draw,thick,-]
    \centering
\begin{tikzpicture}[scale=0.5]
\begin{scope}

\foreach \x/\y in {45/1,90/2,135/3,180/4,225/5,270/6,315/7,0/8}
{\node[vert] (\y) at (\x:3){};}

\foreach \x/\y in {45/v_1,90/v_2,135/v_3,180/v_4,225/v_5,270/v_6,315/v_7,0/v_8}
{\node[ver] at (\x:2.5){$\y$};}
							
\foreach \x/\y in {22.5/1',67.5/2',112.5/3',157.5/4',202.5/5',247.5/6',292.5/7',337.5/8'}
{\node[verti] (\y) at (\x:6){};}

\foreach \x/\y in {45/1'',90/2'',135/3'',180/4'',225/5'',270/6'',315/7'',0/8''}
{\node[vert] (\y) at (\x:6){};}

\foreach \x/\y in {22.5/a,67.5/b,112.5/a,157.5/b,202.5/a,247.5/b,292.5/a,337.5/b}
{\node[ver] at (\x:6.5){$\y$};}

\foreach \x/\y in {45/x_3,90/x_1,135/x_2,180/x_4,225/x_2,270/x_1,315/x_3,0/x_4}
{\node[ver] (\y) at (\x:6.5){$\y$};}

\foreach \x/\y in {1'/1'',1''/2',2'/2'',2''/3',3'/3'',3''/4',4'/4'',4''/5',5'/5'',5''/6',6'/6'',6''/7',7'/7'',7''/8',8'/8'',8''/1'}
{\path[edge,dotted] (\x) -- (\y);}

\foreach \x/\y in {1/1'',2/2'',3/3'',4/4'',5/5'',6/6'',7/7'',8/8''}
{\path[edge,dashed] (\x) -- (\y);}

\foreach \x/\y in {1/2,2/3,3/4,4/5,5/6,6/7,7/8,8/1}
{\path[edge] (\x) -- (\y);}

\foreach \x/\y in {2/3,4/5,6/7,8/1}
{\draw [line width=3pt, line cap=round, dash pattern=on 0pt off 1.3\pgflinewidth]  (\x) -- (\y);}

\end{scope}   

\begin{scope} [scale=1, shift = {(-5, -7.3)}]
\foreach \x/\y/\z in {1/-0.5/0,5/-0.5/1,9/-0.5/2}
{\node[ver] () at (\x,\y){$\z$};}
\path[edge] (0,0) -- (2,0);
\path[edge] (4,0) -- (6,0);
\draw [line width=3pt, line cap=round, dash pattern=on 0pt off 1.3\pgflinewidth]  (4,0) -- (6,0);
\path[edge, dashed] (8,0) -- (10,0);

\end{scope}  
    \end{tikzpicture}

 \caption{Embedding on $\#_3 \mathbb{RP}^2$ of gem representing $\#_{3} \mathbb{RP}^2$ of type $(8^3)$.}
    \label{fig:1}
\end{figure}
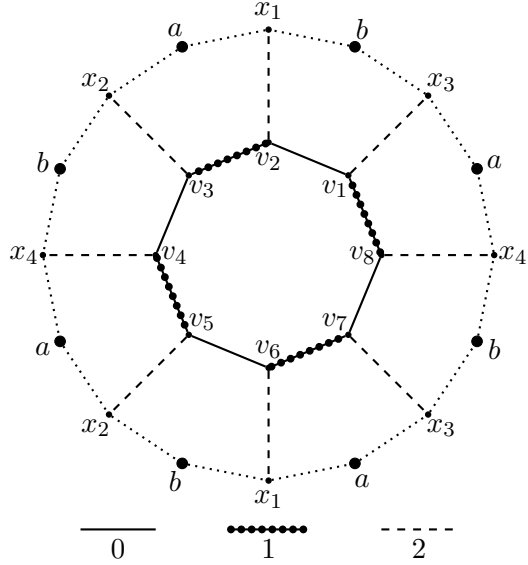


\noindent \textbf{Figure \ref{fig:1}.} In this diagram, the surface is composed of two 0-cells labeled $a$ and $b$, four 1-cells, and one 2-cell. Consequently, the Euler characteristic of the surface is $-1$. The $\{0,1\}$-colored cycle bounds the inner octagonal face $v_1v_2v_3v_4v_5v_6v_7v_8$. The $\{1,2\}$-colored $8$-cycle bounds the face $v_1v_8x_4v_4v_5x_2v_3v_2x_1v_6v_7x_3$, and the 0-cell $a$ lies in the interior of this face. The $\{0,2\}$-colored $8$-cycle bounds the face $v_1v_2x_1v_6v_5x_2v_3v_4x_4v_8v_7x_3$, and the 0-cell $b$ lies in the interior of this face. Thus, the 3-regular colored graph in Figure \ref{fig:1} is a semi-equivelar gem of type $[(8^3);8]$ embedded regularly on the surface $\#_3 \mathbb{RP}^2$.

\noindent \textbf{Figure \ref{fig:2}.} In this figure, the surface exhibits seven $0$-cells labeled $a$, $b$, $c$, $d$, $e$, $f$, and $g$, along with nine $1$-cells, and one $2$-cell. Hence, the Euler characteristic of the surface is $-1$. The $\{0,2\}$-colored three $8$-cycles $A_1,A_2,A_3$ bound the inner octagonal faces $v_1v_2v_3v_4v_5v_6v_7v_8, v_9v_{10}v_{11}v_{12}$ $v_{13}v_{14}v_{15}v_{16}$, and $v_{17}v_{18}v_{19}v_{20}v_{21}v_{22}v_{23}v_{24}$, respectively. The $\{1,2\}$-colored four $6$-cycles $B_1,$ $B_2,B_3$, and $B_4$ bound the faces $v_1x_6v_{24}v_{23}x_9v_{13}v_{12}x_3v_2$ containing $c$, $v_3x_2v_{5}v_{6}v_{10}v_{11}x_1v_4$ containing $a$, $v_{14}x_7v_{16}v_{9}v_{21}v_{22}x_8v_{15}$ containing $f$, and $v_{17}x_5v_{19}v_{20}v_{7}v_{8}x_4v_{18}$ containing $d$, respectively. The $\{0,1\}$-colored four $6$-cycles $C_1,C_2,C_3,C_4$ bound the faces $v_1x_6v_{24}v_{17}x_5v_{19}v_{18}x_4v_8$ containing $e$, $v_3x_2v_{5}v_{4}x_1v_{11}v_{12}x_3v_2$ containing $b$, $v_{14}x_7v_{16}v_{15}x_8v_{22}v_{23}x_9v_{13}$ containing $g$, and the inner face $v_{6}v_{7}v_{20}v_{21}v_{9}v_{10}$, respectively. Thus, the 3-regular colored graph in Figure \ref{fig:2} is a semi-equivelar gem of type $[(6^2,8);24]$ embedded regularly on the surface $\#_3 \mathbb{RP}^2$.

\begin{figure}[h!]
\tikzstyle{ver}=[]
\tikzstyle{vert}=[circle, draw, fill=black!100, inner sep=0pt, minimum width=2pt]
\tikzstyle{verti}=[circle, draw, fill=black!100, inner sep=0pt, minimum width=4pt]
\tikzstyle{edge} = [draw,thick,-]
    \centering
\begin{tikzpicture}[scale=0.6]

\begin{scope}
\foreach \x/\y in {0/1,20/2,40/3,60/4,80/5,100/6,120/7,140/8,160/9,180/10,200/11,220/12,240/13,260/14,280/15,300/16,320/17,340/18}
{\node[vert] (\y) at (\x:7){};}

\foreach \x/\y in {10/1',30/2',50/3',70/4',90/5',110/6',130/7',150/8',170/9',190/10',210/11',230/12',250/13',270/14',290/15',310/16',330/17',350/18'}
{\node[verti] (\y) at (\x:7){};}

\foreach \x/\y/\z in 
{0/1/x_5,20/2/x_4,40/3/x_6,60/4/x_3,80/5/x_2,100/6/x_1,120/7/x_2,140/8/x_1,160/9/x_3,180/10/x_9,200/11/x_7,220/12/x_8,240/13/x_7,260/14/x_8,280/15/x_9,300/16/x_6,320/17/x_5,340/18/x_4}
{\node[ver] () at (\x:7.5){$\z$};}

\foreach \x/\y/\z in {10/1'/d,30/2'/e,50/3'/c,70/4'/b,90/5'/a,110/6'/b,130/7'/a,150/8'/b,170/9'/c,190/10'/g,210/11'/f,230/12'/g,250/13'/f,270/14'/g,290/15'/c,310/16'/e,330/17'/d,350/18'/e} {\node[ver] () at (\x:7.5){$\z$};}
\end{scope} 

\begin{scope}[shift = {(0, 3)}, rotate=20 ]
\foreach \x/\y in {0/x1,45/x2,90/x3,135/x4,180/x5,225/x6,270/x7,315/x8}
{\node[vert] (\y) at (\x:1.5){};}

\foreach \x/\y in {0/v_1,45/v_2,90/v_3,135/v_4,180/v_5,225/v_6,270/v_7,315/v_8}
{\node[ver] at (\x:1){\scriptsize{$\y$}};}
\end{scope} 

\begin{scope}[shift = {(-3, -1.5)}, rotate=0 ]
\foreach \x/\y in {0/y2,45/y3,90/y4,135/y5,180/y6,225/y7,270/y8,315/y1}
{\node[vert] (\y) at (\x:1.5){};}

\foreach \x/\y in {0/v_9,45/v_{10},90/v_{11},135/v_{12},180/v_{13},225/v_{14},270/v_{15},315/v_{16}}
{\node[ver] at (\x:1)
{\scriptsize{$\y$}};}
\end{scope} 

\begin{scope}[shift = {(3, -1.5)}]
\foreach \x/\y in {0/z1,45/z2,90/z3,135/z4,180/z5,225/z6,270/z7,315/z8}
{\node[vert] (\y) at (\x:1.5){};}

\foreach \x/\y in {0/v_{17},45/v_{18},90/v_{19},135/v_{20},180/v_{21},225/v_{22},270/v_{23},315/v_{24}}
{\node[ver] at (\x:1){\scriptsize{$\y$}};}
\end{scope} 

\foreach \x/\y in
{1/1',1'/2,2/2',2'/3,3/3',3'/4,4/4',4'/5,5/5',5'/6,6/6',6'/7,7/7',7'/8,8/8',8'/9,9/9',9'/10,10/10',10'/11,11/11',11'/12,12/12',12'/13,13/13',13'/14,14/14',14'/15,15/15',15'/16,16/16',16'/17,17/17',17'/18,18/18',18'/1}
{\path[edge, dotted] (\x) -- (\y);}

\foreach \x/\y in
{x1/x2,x2/x3,x3/x4,x4/x5,x5/x6,x6/x7,x7/x8,x8/x1}
{\path[edge] (\x) -- (\y);}

\foreach \x/\y in {x1/x2,x3/x4,x5/x6,x7/x8}
{{\draw [line width=3pt, line cap=round, dash pattern=on 0pt off 1.3\pgflinewidth]  (\x) -- (\y);}}

\foreach \x/\y in
{y1/y2,y2/y3,y3/y4,y4/y5,y5/y6,y6/y7,y7/y8,y8/y1}
{\path[edge] (\x) -- (\y);}

\foreach \x/\y in {y1/y2,y3/y4,y5/y6,y7/y8}
{{\draw [line width=3pt, line cap=round, dash pattern=on 0pt off 1.3\pgflinewidth]  (\x) -- (\y);}}

\foreach \x/\y in
{z1/z2,z2/z3,z3/z4,z4/z5,z5/z6,z6/z7,z7/z8,z8/z1}
{\path[edge] (\x) -- (\y);}

\foreach \x/\y in {z1/z2,z3/z4,z5/z6,z7/z8}
{{\draw [line width=3pt, line cap=round, dash pattern=on 0pt off 1.3\pgflinewidth]  (\x) -- (\y);}}

\path[edge,dashed] (x6) -- (y3);
\path[edge,dashed] (y2) -- (z5);
\path[edge,dashed] (z4) -- (x7);

\foreach \x/\y in {x8/2,x1/3,x2/4,x3/5,x4/6,x5/7,z3/1,z2/18,z1/17,z8/16,z7/15,z6/14,y4/8,y5/9,y6/10,y7/11,y8/12,y1/13}
{\path[edge, dashed] (\x) -- (\y);}

\begin{scope} [scale=1, shift = {(-5, -8)}]
\foreach \x/\y/\z in {1/-0.5/0,5/-0.5/1,9/-0.5/2}
{\node[ver] () at (\x,\y){$\z$};}
\path[edge] (0,0) -- (2,0);
\path[edge,dashed] (4,0) -- (6,0);

\path[edge] (8,0) -- (10,0);
\draw [line width=3pt, line cap=round, dash pattern=on 0pt off 1.3\pgflinewidth]  (8,0) -- (10,0);

\end{scope}


 \end{tikzpicture}

 \caption{Embedding on $\#_3 \mathbb{RP}^2$ of gem representing $\#_{3} \mathbb{RP}^2$ of type $(6^2,8)$.}  \label{fig:2}
\end{figure}
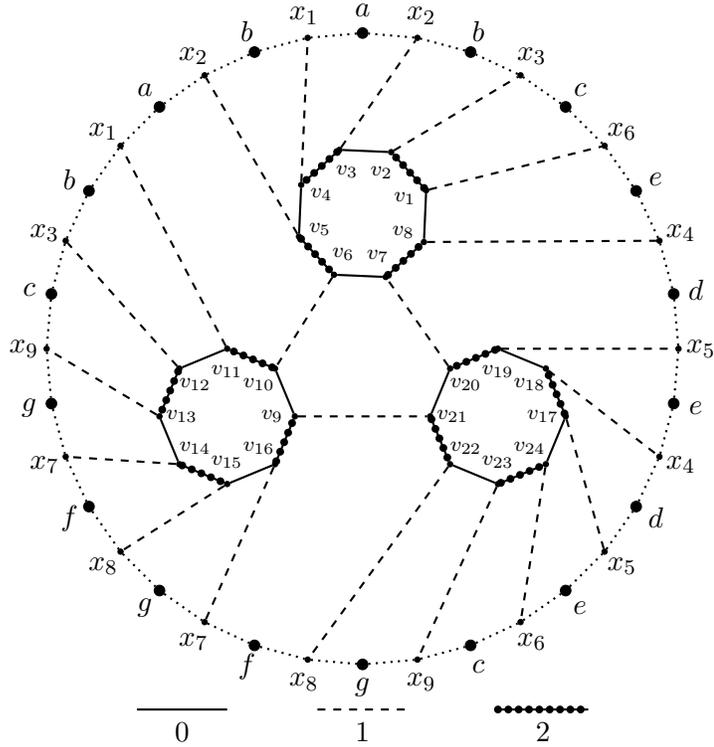

\noindent \textbf{Figure \ref{fig:3}.} In this diagram, the surface features three $0$-cells labeled $a$, $b$, and $c$, five $1$-cells, and one $2$-cell. Consequently, the Euler characteristic of the surface is $-1$. The $\{0,1\}$-colored two cycles $A_1, A_2$ bound the two hexagonal faces $v_1v_2v_3v_4v_5v_6$ and $v_7v_8v_9v_{10}v_{11}v_{12}$, respectively. The $\{1,2\}$-colored two $6$-cycles $B_1, B_2$ bound the two faces $v_1v_2x_3v_4v_3x_4v_8v_7x_5$ and $v_5v_6v_9v_{10}x_2v_{12}v_{11}x_1$, containing $0$-cells in interiors $c$ and $a$, respectively. The $\{0,2\}$-colored $12$-cycle bounds the face $v_1v_6v_9v_8x_4v_3v_2x_3v_4v_5x_1v_{11}v_{10}x_2v_{12}v_7x_5$ containing the $0$-cell $b$ in its interior. Thus, the 3-regular colored graph in Figure \ref{fig:3} is a semi-equivelar gem embedded regularly on the surface with Euler characteristic $-1$ of type $[(6^2,12);12]$.

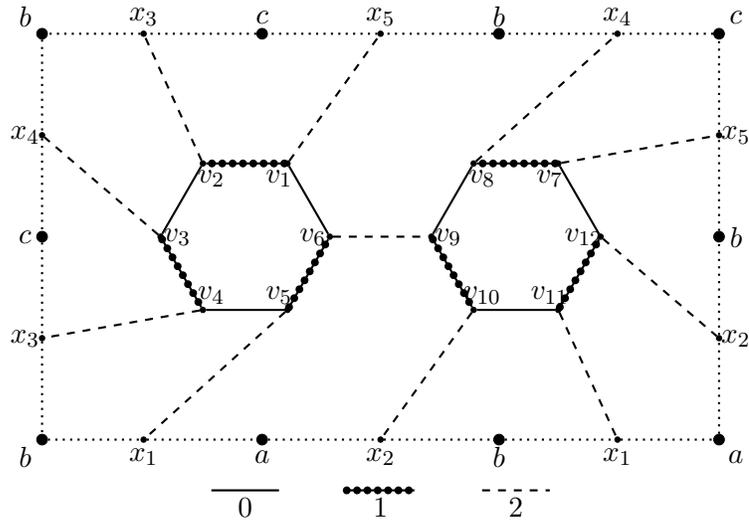
\begin{figure}[h!]
\tikzstyle{ver}=[]
\tikzstyle{vert}=[circle, draw, fill=black!100, inner sep=0pt, minimum width=2pt]
\tikzstyle{verti}=[circle, draw, fill=black!100, inner sep=0pt, minimum width=4pt]
\tikzstyle{edge} = [draw,thick,-]
    \centering
\begin{tikzpicture}[scale=0.45]

\begin{scope}
\foreach \x/\y/\z in
{10/6/1,3.5/6/2,-3.5/6/3,-10/6/4,-10/0/5,-10/-6/6,-3.5/-6/7,3.5/-6/8,10/-6/9,10/0/10}
{\node[verti] (\z) at (\x,\y){};}

\foreach \x/\y/\z in
{10.5/6.5/c,3.5/6.5/b,-3.5/6.5/c,-10.5/6.5/b,-10.5/0/c,-10.5/-6.5/b,-3.5/-6.5/a,3.5/-6.5/b,10.5/-6.5/a,10.5/0/b}
{\node[ver] (\z) at (\x,\y){$\z$};}

\foreach \x/\y/\z in
{7/6/1',0/6/2',-7/6/3',-10/3/4',-10/-3/5',-7/-6/6',0/-6/7',7/-6/8',10/-3/9',10/3/10'}
{\node[vert] (\z) at (\x,\y){};}

\foreach \x/\y/\z in
{7/6.5/x_4,0/6.5/x_5,-7/6.5/x_3,-10.5/3/x_4,-10.5/-3/x_3,-7/-6.5/x_1,0/-6.5/x_2,7/-6.5/x_1,10.5/-3/x_2,10.5/3/x_5}
{\node[ver] (\z) at (\x,\y){$\z$};}

\end{scope}

\begin{scope}[shift={(-4,0)}]
\foreach \x/\y in {60/x1,120/x2,180/x3,240/x4,300/x5,360/x6}
{\node[vert] (\y) at (\x:2.5){};}

\foreach \x/\y in {60/v_1,120/v_2,180/v_3,240/v_4,300/v_5,360/v_6}
{\node[ver] at (\x:2){$\y$};}
\end{scope}

\begin{scope}[shift={(4,0)}]
\foreach \x/\y in {60/y1,120/y2,180/y3,240/y4,300/y5,360/y6}
{\node[vert] (\y) at (\x:2.5){};}

\foreach \x/\y in {60/v_7,120/v_8,180/v_9,240/v_{10},300/v_{11},360/v_{12}}
{\node[ver] at (\x:2){$\y$};}
\end{scope}

\foreach \x/\y in {1/1',1'/2,2/2',2'/3,3/3',3'/4,4/4',4'/5,5/5',5'/6,6/6',6'/7,7/7',7'/8,8/8',8'/9,9/9',9'/10,10/10',10'/1}
{\path[edge,dotted] (\x) -- (\y);}

\foreach \x/\y in {x1/x2,x2/x3,x3/x4,x4/x5,x5/x6,x6/x1}
{\path[edge] (\x) -- (\y);}

\foreach \x/\y in {x1/x2,x3/x4,x5/x6}
{{\draw [line width=3pt, line cap=round, dash pattern=on 0pt off 1.3\pgflinewidth]  (\x) -- (\y);}}

\foreach \x/\y in {y1/y2,y2/y3,y3/y4,y4/y5,y5/y6,y6/y1}
{\path[edge] (\x) -- (\y);}

\foreach \x/\y in {y1/y2,y3/y4,y5/y6}
{{\draw [line width=3pt, line cap=round, dash pattern=on 0pt off 1.3\pgflinewidth]  (\x) -- (\y);}}

\foreach \x/\y in {x6/y3,y2/1',y1/10',y6/9',y5/8',y4/7',x1/2',x2/3',x3/4',x4/5',x5/6'}
{\path[edge,dashed] (\x) -- (\y);}

\begin{scope} [scale=1, shift = {(-5, -7.5)}]
\foreach \x/\y/\z in {1/-0.5/0,5/-0.5/1,9/-0.5/2}
{\node[ver] () at (\x,\y){$\z$};}
\path[edge] (0,0) -- (2,0);
\path[edge] (4,0) -- (6,0);
\draw [line width=3pt, line cap=round, dash pattern=on 0pt off 1.3\pgflinewidth]  (4,0) -- (6,0);
\path[edge, dashed] (8,0) -- (10,0);

\end{scope}  

 \end{tikzpicture}

 \caption{Embedding on $\#_3 \mathbb{RP}^2$ of gem representing $\#_{3} \mathbb{RP}^2$ of type $(6^2,12)$.}  \label{fig:3}
\end{figure}

\noindent \textbf{Figure \ref{fig:4}.} In this illustration, the surface contains two $0$-cells labeled $a$ and $b$, four $1$-cells, and one $2$-cell. Therefore, the Euler characteristic of the surface is $-1$. The $\{0,1\}$-colored two cycles $A_1, A_2$ bound the two faces $v_1v_2v_3v_4v_5v_6v_7v_8v_9v_{10}$ and $v_{11}v_{12}v_{13}v_{14}v_{15}v_{16}v_{17}v_{18}v_{19}v_{20}$, respectively. The $\{1,2\}$-colored two $10$-cycles $B_1, B_2$ bound the faces $v_1v_2x_2v_4v_3x_1v_5v_6x_5v_{11}v_{20}x_3$ $v_{14}v_{15}$ and $v_9v_{10}v_{16}v_{17}x_7v_{8}v_{7}x_6v_{12}v_{13}x_4v_{19}v_{18}x_8$, containing in their interiors, $0$-cells $a$ and $b$, respectively. The $\{0,2\}$-colored five $4$-cycles $C_1,C_2,C_3,C_4,C_5$ bound the faces $v_1v_{10}v_{16}v_{15}, v_3x_1$
$v_5v_4x_2v_2, v_6x_5v_{11}v_{12}x_6v_7, v_8x_7v_{17}v_{18}x_8v_9$, and $v_{13}x_4v_{19}v_{20}x_3v_{14}$, respectively. Thus, the 3-regular colored graph in Figure \ref{fig:4} is a semi-equivelar gem of type $[(10^2,4);20]$ embedded regularly on the surface $\#_3 \mathbb{RP}^2$.

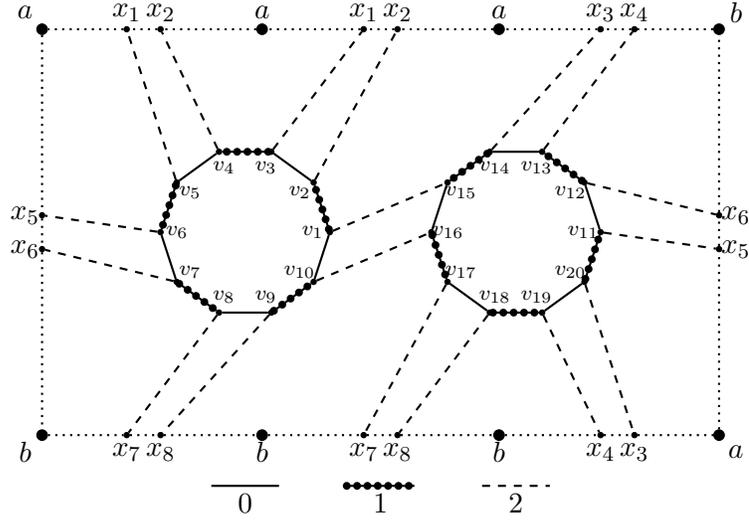
\begin{figure}[h!]
\tikzstyle{ver}=[]
\tikzstyle{vert}=[circle, draw, fill=black!100, inner sep=0pt, minimum width=2pt]
\tikzstyle{verti}=[circle, draw, fill=black!100, inner sep=0pt, minimum width=4pt]
\tikzstyle{edge} = [draw,thick,-]
    \centering
\begin{tikzpicture}[scale=0.45]

\begin{scope}
\foreach \x/\y/\z in
{10/6/1,3.5/6/2,-3.5/6/3,-10/6/4,-10/-6/5,-3.5/-6/6,3.5/-6/7,10/-6/8}
{\node[verti] (\z) at (\x,\y){};}

\foreach \x/\y/\z in
{10.5/6.5/b,3.5/6.5/a,-3.5/6.5/a,-10.5/6.5/a,-10.5/-6.5/b,-3.5/-6.5/b,3.5/-6.5/b,10.5/-6.5/a}
{\node[ver] (\z) at (\x,\y){$\z$};}

\foreach \x/\y/\z in
{7.5/6/1',6.5/6/2',0.5/6/3',-0.5/6/4',-6.5/6/5',-7.5/6/6',-10/0.5/7',-10/-0.5/8',-7.5/-6/9',-6.5/-6/10',-0.5/-6/11',0.5/-6/12',6.5/-6/13',7.5/-6/14',10/-0.5/15',10/0.5/16'}
{\node[vert] (\z) at (\x,\y){};}

\foreach \x/\y/\z in
{7.5/6.5/x_4,6.5/6.5/x_3,0.5/6.5/x_2,-0.5/6.5/x_1,-6.5/6.5/x_2,-7.5/6.5/x_1,-10.5/0.5/x_5,-10.5/-0.5/x_6,-7.5/-6.5/x_7,-6.5/-6.5/x_8,-0.5/-6.5/x_7,0.5/-6.5/x_8,6.5/-6.5/x_4,7.5/-6.5/x_{3},10.5/-0.5/x_{5},10.5/0.5/x_{6}}
{\node[ver] (\z) at (\x,\y){$\z$};}

\end{scope}

\begin{scope}[shift={(-4,0)}]
\foreach \x/\y in {0/x1,36/x2,72/x3,108/x4,144/x5,180/x6,216/x7,252/x8,288/x9,324/x10}
{\node[vert] (\y) at (\x:2.5){};}

\foreach \x/\y in {0/v_1,36/v_2,72/v_3,108/v_4,144/v_5,180/v_6,216/v_7,252/v_8,288/v_9,324/v_{10}}
{\node[ver] at (\x:1.9){\scriptsize{$\y$}};}
\end{scope}

\begin{scope}[shift={(4,0)}]
\foreach \x/\y in {0/y1,36/y2,72/y3,108/y4,144/y5,180/y6,216/y7,252/y8,288/y9,324/y10}
{\node[vert] (\y) at (\x:2.5){};}

\foreach \x/\y in {0/v_{11},36/v_{12},72/v_{13},108/v_{14},144/v_{15},180/v_{16},216/v_{17},252/v_{18},288/v_{19},324/v_{20}}
{\node[ver] at (\x:1.9){\scriptsize{$\y$}};}
\end{scope}

\foreach \x/\y in {1/1',1'/2',2'/2,2/3',3'/4',4'/3,3/5',5'/6',6'/4,4/7',7'/8',8'/5,5/9',9'/10',10'/6,6/11',11'/12',12'/7,7/13',13'/14',14'/8,8/15',15'/16',16'/1}
{\path[edge,dotted] (\x) -- (\y);}

\foreach \x/\y in {x1/x2,x2/x3,x3/x4,x4/x5,x5/x6,x6/x7,x7/x8,x8/x9,x9/x10,x10/x1}
{\path[edge] (\x) -- (\y);}

\foreach \x/\y in {x1/x2,x3/x4,x5/x6,x7/x8,x9/x10}
{{\draw [line width=3pt, line cap=round, dash pattern=on 0pt off 1.3\pgflinewidth]  (\x) -- (\y);}}

\foreach \x/\y in {y1/y2,y2/y3,y3/y4,y4/y5,y5/y6,y6/y7,y7/y8,y8/y9,y9/y10,y10/y1}
{\path[edge] (\x) -- (\y);}

\foreach \x/\y in {y2/y3,y4/y5,y6/y7,y8/y9,y10/y1}
{{\draw [line width=3pt, line cap=round, dash pattern=on 0pt off 1.3\pgflinewidth]  (\x) -- (\y);}}

\foreach \x/\y in {x10/y6,x1/y5,x2/3',x3/4',x4/5',x5/6',x6/7',x7/8',x8/9',x9/10',y7/11',y8/12',y9/13',y10/14',y1/15',y2/16',y3/1',y4/2'}
{\path[edge,dashed] (\x) -- (\y);}

\begin{scope} [scale=1, shift = {(-5, -7.5)}]
\foreach \x/\y/\z in {1/-0.5/0,5/-0.5/1,9/-0.5/2}
{\node[ver] () at (\x,\y){$\z$};}
\path[edge] (0,0) -- (2,0);
\path[edge] (4,0) -- (6,0);
\draw [line width=3pt, line cap=round, dash pattern=on 0pt off 1.3\pgflinewidth]  (4,0) -- (6,0);
\path[edge, dashed] (8,0) -- (10,0);

\end{scope}  

 \end{tikzpicture}

 \caption{Embedding on $\#_3 \mathbb{RP}^2$ of gem representing $\#_{3} \mathbb{RP}^2$ of type $(10^2,4)$.}  \label{fig:4}
\end{figure}

\noindent \textbf{Figure \ref{fig:5}.} In this diagram, the surface is composed of only one $0$-cell labeled $a$, three $1$-cells, and one $2$-cell. So, the Euler characteristic of the surface is $-1$. The $\{0,1\}$-colored cycle bounds the inner face $v_1v_2v_3v_4v_5v_6v_7v_8v_9v_{10}v_{11}v_{12}$. The $\{1,2\}$-colored $12$-cycle bounds the face $v_1x_2v_3v_2x_1v_4v_5x_6v_7v_6x_5v_8v_9x_4v_{11}v_{10}x_3v_{12}$, and the $0$-cell $a$ lies in the interior of this face. The $\{0,2\}$-colored three $4$-cycles $C_1, C_2, C_3$ bound faces $v_1x_2v_3v_4x_1v_2,  v_9x_4v_{11}v_{12}x_3v_{10},$ and $ v_1x_2v_3v_4x_1v_2$, respectively. Thus, the 3-regular colored graph in Figure \ref{fig:5} is a semi-equivelar gem of type $[(12^2,4);12]$ embedded regularly on the surface $\#_3 \mathbb{RP}^2$.

\begin{figure}[h!]
\tikzstyle{ver}=[]
\tikzstyle{vert}=[circle, draw, fill=black!100, inner sep=0pt, minimum width=2pt]
\tikzstyle{verti}=[circle, draw, fill=black!100, inner sep=0pt, minimum width=4pt]
\tikzstyle{edge} = [draw,thick,-]
    \centering
\begin{tikzpicture}[scale=0.6]
\begin{scope}

\foreach \x/\y in {0/1,30/2,60/3,90/4,120/5,150/6,180/7,210/8,240/9,270/10,300/11,330/12}
{\node[vert] (\y) at (\x:3){};}

\foreach \x/\y in {0/v_1,30/v_2,60/v_3,90/v_4,120/v_5,150/v_6,180/v_7,210/v_8,240/v_9,270/v_{10},300/v_{11},330/v_{12}}
{\node[ver] at (\x:2.5){\small{$\y$}};}

\foreach \x/\y in {10/1',20/2',70/3',80/4',130/5',140/6',190/7',200/8',250/9',260/10',310/11',320/12'}
{\node[vert] (\y) at (\x:6){};}

\foreach \x/\y in {45/1'',105/2'',165/3'',225/4'',285/5'',345/6''}
{\node[verti] (\y) at (\x:6){};}

\foreach \x/\y in {10/x_2,20/x_1,70/x_2,80/x_1,130/x_6,140/x_5,190/x_6,200/x_5,250/x_4,260/x_3,310/x_4,320/x_3}
{\node[ver] (\y) at (\x:6.5){$\y$};}

\foreach \x/\y in {45/a,105/a,165/a,225/a,285/a,345/a}
{\node[ver] (\y) at (\x:6.5){$\y$};}

\foreach \x/\y in {1/1',2/2',3/3',4/4',5/5',6/6',7/7',8/8',9/9',10/10',11/11',12/12'}
{\path[edge,dashed] (\x) -- (\y);}

\foreach \x/\y in {1/2,2/3,3/4,4/5,5/6,6/7,7/8,8/9,9/10,10/11,11/12,12/1}
{\path[edge] (\x) -- (\y);}

\foreach \x/\y in {2/3,4/5,6/7,8/9,10/11,12/1}
{{\draw [line width=3pt, line cap=round, dash pattern=on 0pt off 1.3\pgflinewidth]  (\x) -- (\y);}}

\foreach \x/\y in {1'/2',2'/1'',1''/3',3'/4',4'/2'',2''/5',5'/6',6'/3'',3''/7',7'/8',8'/4'',4''/9',9'/10',10'/5'',5''/11',11'/12',12'/6'',6''/1'}
{\path[edge,dotted] (\x) -- (\y);}

\end{scope}

\begin{scope} [scale=1, shift = {(-5, -7.2)}]
\foreach \x/\y/\z in {1/-0.5/0,5/-0.5/1,9/-0.5/2}
{\node[ver] () at (\x,\y){$\z$};}
\path[edge] (0,0) -- (2,0);
\path[edge] (4,0) -- (6,0);
\draw [line width=3pt, line cap=round, dash pattern=on 0pt off 1.3\pgflinewidth]  (4,0) -- (6,0);
\path[edge, dashed] (8,0) -- (10,0);

\end{scope}  

    \end{tikzpicture}

 \caption{Embedding on $\#_3 \mathbb{RP}^2$ of gem representing $\#_{3} \mathbb{RP}^2$ of type $(12^2,4)$.} \label{fig:5}
\end{figure}
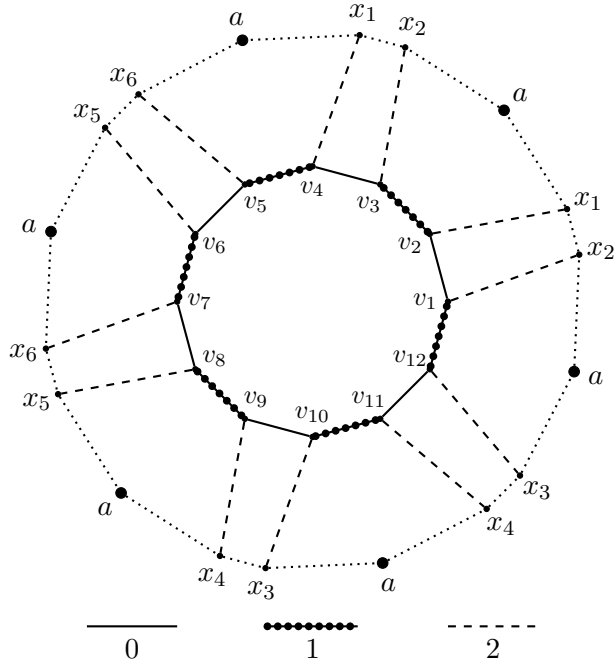

\noindent \textbf{Figure \ref{fig:6}.} The surface in this figure exhibits six $0$-cells labeled $a,b,c,d,e$, and $f$, eight $1$-cells, and one $2$-cell. Hence, the Euler characteristic of the surface is $-1$. The $\{1,2\}$-colored fourteen $6$-cycles $A_1,A_2,\dots,A_{14}$ bound the hexagonal faces $v_1v_2v_3v_4v_5v_6,  v_7v_8v_9v_{10}v_{11}v_{12},\dots,$ $v_{79}v_{80}v_{81}v_{82}v_{83}v_{84}$, respectively. The $\{0,2\}$-colored six $14$-cycles $B_1,B_2,B_3,B_4,B_5$, $B_6$ bound the faces $v_1x_{16}v_{47}v_{46}x_{12}v_{83}v_{82}v_{78}v_{77}v_{71}v_{70}v_{66}v_{65}x_{10}v_{9}v_{10}v_{6}$ containing $d$, $v_8x_{9}v_{64}v_{63}v_{22}v_{21}v_{13}$ $v_{18}v_{55}v_{60}x_5v_{19}v_{20}v_{14}v_{15}v_{7}$ containing $b$, $v_4x_{13}v_{79}v_{84}x_{11}v_{45}v_{44}v_{50}v_{49}v_{57}v_{56}v_{17}v_{16}v_{12}v_{11}v_{5}$ containing $a$, $v_{25}x_{2}v_{32}v_{31}v_{39}v_{38}x_4v_{27}v_{26}x_1v_{33}v_{34}v_{75}v_{76}v_{72}v_{67}v_{30}$ containing $c$, $v_{24}x_{6}v_{59}v_{58}v_{54}v_{53}x_7v_{42}v_{37}$ $x_3v_{28}v_{29}v_{68}v_{69}v_{61}v_{62}v_{23}$ containing $e$, and $v_2x_{15}v_{48}v_{43}v_{51}v_{52}x_8v_{41}v_{40}v_{36}v_{35}v_{74}v_{73}v_{81}v_{80}x_{14}v_{3}$ containing $f$, respectively. The $\{0,1\}$-colored twenty-one $4$-cycles $C_1, C_2,\dots,C_{21}$ bound faces $v_6v_{10}v_{11}v_{5}, v_1x_{16}v_{47}v_{48}x_{15}v_{2},\dots,v_8x_9v_{64}v_{65}x_{10}v_{9}$, respectively. Thus, the 3-regular colored graph in Figure \ref{fig:6} is a semi-equivelar gem of type $[(4,6,14);84]$ embedded regularly on the surface $\#_3 \mathbb{RP}^2$.

\begin{figure}[h!]
\tikzstyle{ver}=[]
\tikzstyle{vert}=[circle, draw, fill=black!100, inner sep=0pt, minimum width=2pt]
\tikzstyle{verti}=[circle, draw, fill=black!100, inner sep=0pt, minimum width=4pt]
\tikzstyle{edge} = [draw,thick,-]
    \centering
\begin{tikzpicture}[scale=0.5]

\begin{scope}[shift={(-12,2.5)},rotate=-30]
\foreach \x/\y in {60/a1,120/a2,180/a3,240/a4,300/a5,360/a6}
{\node[vert] (\y) at (\x:1.2){};}

\foreach \x/\y in {60/v_1,120/v_2,180/v_3,240/v_4,300/v_5,360/v_6}
{\node[ver] at (\x:0.75){\tiny{$\y$}};}

\foreach \x/\y in {a1/a2,a2/a3,a3/a4,a4/a5,a5/a6,a6/a1}
{\path[edge] (\x) -- (\y);}
\foreach \x/\y in {a1/a6,a2/a3,a4/a5}
{{\draw [line width=3pt, line cap=round, dash pattern=on 0pt off 1.3\pgflinewidth]  (\x) -- (\y);}}
\end{scope}

\begin{scope}[shift={(-8,2.5)},rotate=-30]
\foreach \x/\y in {60/b1,120/b2,180/b3,240/b4,300/b5,360/b6}
{\node[vert] (\y) at (\x:1.2){};}

\foreach \x/\y in {60/v_7,120/v_8,180/v_9,240/v_{10},300/v_{11},360/v_{12}}
{\node[ver] at (\x:0.75){\tiny{$\y$}};}

\foreach \x/\y in {b1/b2,b2/b3,b3/b4,b4/b5,b5/b6,b6/b1}
{\path[edge] (\x) -- (\y);}
\foreach \x/\y in {b1/b2,b3/b4,b5/b6}
{{\draw [line width=3pt, line cap=round, dash pattern=on 0pt off 1.3\pgflinewidth]  (\x) -- (\y);}}
\end{scope}

\begin{scope}[shift={(-4,2.5)},rotate=-30]
\foreach \x/\y in {60/c1,120/c2,180/c3,240/c4,300/c5,360/c6}
{\node[vert] (\y) at (\x:1.2){};}

\foreach \x/\y in {60/v_{13},120/v_{14},180/v_{15},240/v_{16},300/v_{17},360/v_{18}}
{\node[ver] at (\x:0.75){\tiny{$\y$}};}

\foreach \x/\y in {c1/c2,c2/c3,c3/c4,c4/c5,c5/c6,c6/c1}
{\path[edge] (\x) -- (\y);}
\foreach \x/\y in {c1/c6,c2/c3,c4/c5}
{{\draw [line width=3pt, line cap=round, dash pattern=on 0pt off 1.3\pgflinewidth]  (\x) -- (\y);}}
\end{scope}

\begin{scope}[shift={(0,2.5)},rotate=-30]
\foreach \x/\y in {60/d1,120/d2,180/d3,240/d4,300/d5,360/d6}
{\node[vert] (\y) at (\x:1.2){};}

\foreach \x/\y in {60/v_{19},120/v_{20},180/v_{21},240/v_{22},300/v_{23},360/v_{24}}
{\node[ver] at (\x:0.75){\tiny{$\y$}};}

\foreach \x/\y in {d1/d2,d2/d3,d3/d4,d4/d5,d5/d6,d6/d1}
{\path[edge] (\x) -- (\y);}
\foreach \x/\y in {d1/d2,d3/d4,d5/d6}
{{\draw [line width=3pt, line cap=round, dash pattern=on 0pt off 1.3\pgflinewidth]  (\x) -- (\y);}}
\end{scope}

\begin{scope}[shift={(4,2.5)},rotate=-30]
\foreach \x/\y in {60/e1,120/e2,180/e3,240/e4,300/e5,360/e6}
{\node[vert] (\y) at (\x:1.2){};}

\foreach \x/\y in {60/v_{25},120/v_{26},180/v_{27},240/v_{28},300/v_{29},360/v_{30}}
{\node[ver] at (\x:0.75){\tiny{$\y$}};}

\foreach \x/\y in {e1/e2,e2/e3,e3/e4,e4/e5,e5/e6,e6/e1}
{\path[edge] (\x) -- (\y);}
\foreach \x/\y in {e1/e6,e2/e3,e4/e5}
{{\draw [line width=3pt, line cap=round, dash pattern=on 0pt off 1.3\pgflinewidth]  (\x) -- (\y);}}
\end{scope}

\begin{scope}[shift={(8,2.5)},rotate=-30]
\foreach \x/\y in {60/f1,120/f2,180/f3,240/f4,300/f5,360/f6}
{\node[vert] (\y) at (\x:1.2){};}

\foreach \x/\y in {60/v_{31},120/v_{32},180/v_{33},240/v_{34},300/v_{35},360/v_{36}}
{\node[ver] at (\x:0.75){\tiny{$\y$}};}

\foreach \x/\y in {f1/f2,f2/f3,f3/f4,f4/f5,f5/f6,f6/f1}
{\path[edge] (\x) -- (\y);}
\foreach \x/\y in {f1/f2,f3/f4,f5/f6}
{{\draw [line width=3pt, line cap=round, dash pattern=on 0pt off 1.3\pgflinewidth]  (\x) -- (\y);}}
\end{scope}

\begin{scope}[shift={(12,2.5)},rotate=-30]
\foreach \x/\y in {60/g1,120/g2,180/g3,240/g4,300/g5,360/g6}
{\node[vert] (\y) at (\x:1.2){};}

\foreach \x/\y in {60/v_{37},120/v_{38},180/v_{39},240/v_{40},300/v_{41},360/v_{42}}
{\node[ver] at (\x:0.75){\tiny{$\y$}};}

\foreach \x/\y in {g1/g2,g2/g3,g3/g4,g4/g5,g5/g6,g6/g1}
{\path[edge] (\x) -- (\y);}
\foreach \x/\y in {g1/g6,g2/g3,g4/g5}
{{\draw [line width=3pt, line cap=round, dash pattern=on 0pt off 1.3\pgflinewidth]  (\x) -- (\y);}}
\end{scope}

\begin{scope}[shift={(-12,-2.5)},rotate=-30]
\foreach \x/\y in {60/h1,120/h2,180/h3,240/h4,300/h5,360/h6}
{\node[vert] (\y) at (\x:1.2){};}

\foreach \x/\y in {60/v_{43},120/v_{44},180/v_{45},240/v_{46},300/v_{47},360/v_{48}}
{\node[ver] at (\x:0.75){\tiny{$\y$}};}

\foreach \x/\y in {h1/h2,h2/h3,h3/h4,h4/h5,h5/h6,h6/h1}
{\path[edge] (\x) -- (\y);}
\foreach \x/\y in {h1/h6,h2/h3,h4/h5}
{{\draw [line width=3pt, line cap=round, dash pattern=on 0pt off 1.3\pgflinewidth]  (\x) -- (\y);}}
\end{scope}

\begin{scope}[shift={(-8,-2.5)},rotate=-30]
\foreach \x/\y in {60/i1,120/i2,180/i3,240/i4,300/i5,360/i6}
{\node[vert] (\y) at (\x:1.2){};}

\foreach \x/\y in {60/v_{49},120/v_{50},180/v_{51},240/v_{52},300/v_{53},360/v_{54}}
{\node[ver] at (\x:0.75){\tiny{$\y$}};}

\foreach \x/\y in {i1/i2,i2/i3,i3/i4,i4/i5,i5/i6,i6/i1}
{\path[edge] (\x) -- (\y);}
\foreach \x/\y in {i1/i2,i3/i4,i5/i6}
{{\draw [line width=3pt, line cap=round, dash pattern=on 0pt off 1.3\pgflinewidth]  (\x) -- (\y);}}
\end{scope}

\begin{scope}[shift={(-4,-2.5)},rotate=-30]
\foreach \x/\y in {60/j1,120/j2,180/j3,240/j4,300/j5,360/j6}
{\node[vert] (\y) at (\x:1.2){};}

\foreach \x/\y in {60/v_{55},120/v_{56},180/v_{57},240/v_{58},300/v_{59},360/v_{60}}
{\node[ver] at (\x:0.75){\tiny{$\y$}};}

\foreach \x/\y in {j1/j2,j2/j3,j3/j4,j4/j5,j5/j6,j6/j1}
{\path[edge] (\x) -- (\y);}
\foreach \x/\y in {j1/j6,j2/j3,j4/j5}
{{\draw [line width=3pt, line cap=round, dash pattern=on 0pt off 1.3\pgflinewidth]  (\x) -- (\y);}}
\end{scope}

\begin{scope}[shift={(0,-2.5)},rotate=-30]
\foreach \x/\y in {60/k1,120/k2,180/k3,240/k4,300/k5,360/k6}
{\node[vert] (\y) at (\x:1.2){};}

\foreach \x/\y in {60/v_{61},120/v_{62},180/v_{63},240/v_{64},300/v_{65},360/v_{66}}
{\node[ver] at (\x:0.75){\tiny{$\y$}};}

\foreach \x/\y in {k1/k2,k2/k3,k3/k4,k4/k5,k5/k6,k6/k1}
{\path[edge] (\x) -- (\y);}
\foreach \x/\y in {k1/k2,k3/k4,k5/k6}
{{\draw [line width=3pt, line cap=round, dash pattern=on 0pt off 1.3\pgflinewidth]  (\x) -- (\y);}}
\end{scope}

\begin{scope}[shift={(4,-2.5)},rotate=-30]
\foreach \x/\y in {60/l1,120/l2,180/l3,240/l4,300/l5,360/l6}
{\node[vert] (\y) at (\x:1.2){};}

\foreach \x/\y in {60/v_{67},120/v_{68},180/v_{69},240/v_{70},300/v_{71},360/v_{72}}
{\node[ver] at (\x:0.75){\tiny{$\y$}};}

\foreach \x/\y in {l1/l2,l2/l3,l3/l4,l4/l5,l5/l6,l6/l1}
{\path[edge] (\x) -- (\y);}
\foreach \x/\y in {l1/l6,l2/l3,l4/l5}
{{\draw [line width=3pt, line cap=round, dash pattern=on 0pt off 1.3\pgflinewidth]  (\x) -- (\y);}}
\end{scope}

\begin{scope}[shift={(8,-2.5)},rotate=-30]
\foreach \x/\y in {60/m1,120/m2,180/m3,240/m4,300/m5,360/m6}
{\node[vert] (\y) at (\x:1.2){};}

\foreach \x/\y in {60/v_{73},120/v_{74},180/v_{75},240/v_{76},300/v_{77},360/v_{78}}
{\node[ver] at (\x:0.75){\tiny{$\y$}};}

\foreach \x/\y in {m1/m2,m2/m3,m3/m4,m4/m5,m5/m6,m6/m1}
{\path[edge] (\x) -- (\y);}
\foreach \x/\y in {m1/m2,m3/m4,m5/m6}
{{\draw [line width=3pt, line cap=round, dash pattern=on 0pt off 1.3\pgflinewidth]  (\x) -- (\y);}}
\end{scope}

\begin{scope}[shift={(12,-2.5)},rotate=-30]
\foreach \x/\y in {60/n1,120/n2,180/n3,240/n4,300/n5,360/n6}
{\node[vert] (\y) at (\x:1.2){};}

\foreach \x/\y in {60/v_{79},120/v_{80},180/v_{81},240/v_{82},300/v_{83},360/v_{84}}
{\node[ver] at (\x:0.75){\tiny{$\y$}};}

\foreach \x/\y in {n1/n2,n2/n3,n3/n4,n4/n5,n5/n6,n6/n1}
{\path[edge] (\x) -- (\y);}
\foreach \x/\y in {n1/n6,n2/n3,n4/n5}
{{\draw [line width=3pt, line cap=round, dash pattern=on 0pt off 1.3\pgflinewidth]  (\x) -- (\y);}}
\end{scope}

\foreach \x/\y in {a6/b4,a5/b5,b1/c3,b6/c4,c1/d3,c2/d2,f1/g3,f6/g4,h1/i3,h2/i2,i1/j3,i6/j4,k1/l3,k6/l4,l6/m4,l5/m5,m1/n3,m6/n4,c5/j2,c6/j1,d5/k2,d4/k3,e5/l2,e6/l1,f5/m2,f4/m3}
{\path[edge,dashed] (\x) -- (\y);}

\foreach \x/\y/\z in {-12.5/6/x15, -11/6/x16, -9/6/x10,-8/6/x9, -1/6/x5, 0/6/x6, 2/6/x3,3/6/x4, 5/6/x1,6/6/x2,8/6/y1,9/6/y2, 12/6/y4,13/6/y3,14/2/x7,14/1/x8, 14/-2/x13,14/-1/x14,11.5/-6/x12,13/-6/x11,2/-6/y10,1/-6/y9,-2/-6/y5,-3/-6/y6,-6/-6/y7,-7/-6/y8,-11.5/-6/y16,-10/-6/y15,-14/3/y14,-14/2/y13,-14/-3/y12,-14/-2/y11}
{\node[vert] (\z) at (\x,\y){};}

\foreach \x/\y in {a2/x15, a1/x16,b2/x9,b3/x10, d1/x5,e3/x4,e2/x1,e1/x2,f3/y1,f2/y2,g2/y4,g1/y3,g6/x7, g5/x8, n1/x13,n2/x14,x12/n5,x11/n6,k4/y9,k5/y10,j6/y5,j5/y6,i4/y8,i5/y7,h5/y16,h6/y15,a3/y14,a4/y13,h3/y11,h4/y12}
{\path[edge,dashed] (\x) -- (\y);}

\draw[edge, dashed] plot [smooth,tension=0.7] coordinates{(d6) (1.3,3) (x6)};
\draw[edge, dashed] plot [smooth,tension=0.7] coordinates{(e4) (2.4,3) (x3)};

\path[edge,dotted] (-14,6) -- (14,6);
\path[edge,dotted] (-14,-6) -- (14,-6);
\path[edge,dotted] (-14,-6) -- (-14,6);
\path[edge,dotted] (14,-6) -- (14,6);

\foreach \x/\y/\z in {-12.5/6.5/x_{15}, -11/6.5/x_{16}, -9/6.5/x_{10},-8/6.5/x_9, -1/6.5/x_5, 0/6.5/x_6, 2/6.5/x_3,3/6.5/x_4, 5/6.5/x_1,6/6.5/x_2,8/6.5/x_1,9/6.5/x_2, 12/6.5/x_4,13/6.5/x_3,14.7/2/x_7,14.7/1/x_8, 14.7/-2/x_{13},14.7/-1/x_{14},11.5/-6.5/x_{12},13/-6.5/x_{11},2/-6.5/x_{10},1/-6.5/x_9,-2/-6.5/x_5,-3/-6.5/x_6,-6/-6.5/x_7,-7/-6.5/x_8,-11.5/-6.5/x_{16},-10/-6.5/x_{15},-14.8/3/x_{14},-14.8/2/x_{13},-14.8/-3/x_{12},-14.8/-2/x_{11}}
{\node[ver] () at (\x,\y){$\z$};}

\foreach \x/\y in
{-14/6,-10/6,-4/6,1/6,4/6,7/6,10.5/6,14/6,14/0,14/-6,6/-6,-14/0,-14/-6,-8.5/-6,-4.5/-6,-0.5/-6}
{\node[verti] () at (\x,\y){};}

\foreach \x/\y/\z in
{-14.5/6.5/f,-10/6.7/d,-4/6.7/b,1/6.7/e,4/6.7/c,7/6.7/c,10.5/6.7/c,14.4/6.4/e,14.4/0/f,14.5/-6.5/a,6/-6.7/d,-14.5/0/a,-14.5/-6.5/d,-8.5/-6.7/f,-4.5/-6.7/e,-0.5/-6.7/b}
{\node[ver] () at (\x,\y){$\z$};}

\begin{scope} [scale=1, shift = {(-5, -8)}]
\foreach \x/\y/\z in {1/-0.5/0,5/-0.5/1,9/-0.5/2}
{\node[ver] () at (\x,\y){$\z$};}
\path[edge,dashed] (0,0) -- (2,0);
\path[edge] (8,0) -- (10,0);
\draw [line width=3pt, line cap=round, dash pattern=on 0pt off 1.3\pgflinewidth]  (8,0) -- (10,0);
\path[edge] (4,0) -- (6,0);

\end{scope}  

\end{tikzpicture}
 \caption{Embedding on $\#_3 \mathbb{RP}^2$ of gem representing $\#_{3} \mathbb{RP}^2$ of type $(4,6,14)$.}  \label{fig:6}
\end{figure}
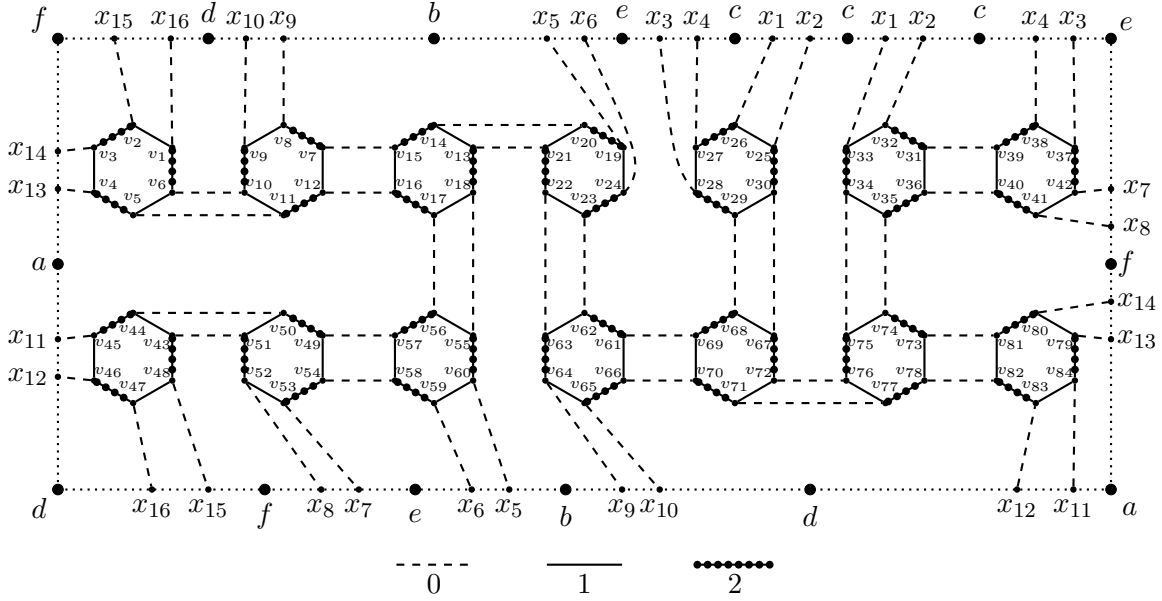

\noindent \textbf{Figure \ref{fig:7}.} In this diagram, the surface features two $0$-cells labeled $a$ and $b$, four $1$-cells, and one $2$-cell. Consequently, the Euler characteristic of the surface is $-1$. The $\{1,2\}$-colored eight $6$-cycles $A_1,A_2,\dots,A_8$ bound the hexagonal faces $v_1v_2v_3v_4v_5v_6, v_7v_8v_9v_{10}v_{11}v_{12},\dots, v_{43}v_{44}v_{45}$ $v_{46}v_{47}v_{48}$, respectively. The $\{0,2\}$-colored three $16$-cycles $B_1,B_2,B_3$ bound the faces $v_1v_{11}v_{12}$ $v_{17}v_{18}v_{24}v_{19}v_{45}v_{46}v_{38}v_{39}v_{34}v_{35}v_{25}v_{26}v_{6}$, $v_3x_3v_{31}v_{36}v_{30}v_{29}x_1v_{22}v_{23}v_{13}v_{14}x_7v_{48}v_{47}v_{37}v_{42}x_5v_{9}v_{10}v_{2}$ containing $a$, $v_4x_4v_{32}v_{33}v_{40}v_{41}x_6v_{8}v_{7}v_{16}v_{15}x_8v_{43}v_{44}v_{20}v_{21}x_2v_{28}v_{27}v_{5}$ containing $b$, respectively. The $\{0,1\}$-colored twelve $4$-cycles $C_1,C_2,\dots, C_{12}$ bound the faces $v_2v_{10}v_{11}v_{1}, v_3x_{3}v_{31}v_{32}x_{4}v_{4},$ $\dots, v_9x_{5}v_{42}v_{41}x_{6}v_{8}$, respectively. Thus, the 3-regular colored graph in Figure \ref{fig:7} is a semi-equivelar gem embedded regularly on the surface $\#_3 \mathbb{RP}^2$ of type $[(4,6,16);48]$.

\begin{figure}[h!]
\tikzstyle{ver}=[]
\tikzstyle{vert}=[circle, draw, fill=black!100, inner sep=0pt, minimum width=2pt]
\tikzstyle{verti}=[circle, draw, fill=black!100, inner sep=0pt, minimum width=4pt]
\tikzstyle{edge} = [draw,thick,-]
    \centering
\begin{tikzpicture}[scale=0.6]

\begin{scope}
\foreach \x/\y/\z in
{10/6/1,3.5/6/2,-3.5/6/3,-10/6/4,-10/-6/5,-3.5/-6/6,3.5/-6/7,10/-6/8}
{\node[verti] (\z) at (\x,\y){};}

\foreach \x/\y/\z in
{10.5/6.5/a,3.5/6.5/b,-3.5/6.5/a,-10.5/6.5/b,-10.5/-6.5/a,-3.5/-6.5/b,3.5/-6.5/a,10.5/-6.5/b}
{\node[ver] (\z) at (\x,\y){$\z$};}

\foreach \x/\y/\z in
{7.5/6/1',6.5/6/2',0.5/6/3',-0.5/6/4',-6.5/6/5',-7.5/6/6',-10/0.5/7',-10/-0.5/8',-7.5/-6/9',-6.5/-6/10',-0.5/-6/11',0.5/-6/12',6.5/-6/13',7.5/-6/14',10/-0.5/15',10/0.5/16'}
{\node[vert] (\z) at (\x,\y){};}

\foreach \x/\y/\z in
{7.5/6.5/x_7,6.5/6.5/x_8,0.5/6.5/x_6,-0.5/6.5/x_5,-6.5/6.5/x_3,-7.5/6.5/x_4,-10.5/0.5/x_2,-10.5/-0.5/x_1,-7.5/-6.5/x_3,-6.5/-6.5/x_4,-0.5/-6.5/x_6,0.5/-6.5/x_5,6.5/-6.5/x_7,7.5/-6.5/x_8,10.5/-0.5/x_{2},10.5/0.5/x_{1}}
{\node[ver] (\z) at (\x,\y){$\z$};}

\end{scope}

\begin{scope}[shift={(-6,2)},rotate=-30]
\foreach \x/\y in {0/p1,60/p2,120/p3,180/p4,240/p5,300/p6}
{\node[vert] (\y) at (\x:1.2){};}

\foreach \x/\y in {0/v_1,60/v_2,120/v_3,180/v_4,240/v_5,300/v_6}
{\node[ver] at (\x:0.75){\tiny{$\y$}};}
\end{scope}

\begin{scope}[shift={(-2,2)},rotate=-30]
\foreach \x/\y in {0/q1,60/q2,120/q3,180/q4,240/q5,300/q6}
{\node[vert] (\y) at (\x:1.2){};}

\foreach \x/\y in {0/v_7,60/v_8,120/v_9,180/v_{10},240/v_{11},300/v_{12}}
{\node[ver] at (\x:0.75){\tiny{$\y$}};}
\end{scope}

\begin{scope}[shift={(2,2)},rotate=-30]
\foreach \x/\y in {0/r1,60/r2,120/r3,180/r4,240/r5,300/r6}
{\node[vert] (\y) at (\x:1.2){};}

\foreach \x/\y in {0/v_{13},60/v_{14},120/v_{15},180/v_{16},240/v_{17},300/v_{18}}
{\node[ver] at (\x:0.75){\tiny{$\y$}};}
\end{scope}

\begin{scope}[shift={(6,2)},rotate=-30]
\foreach \x/\y in {0/s1,60/s2,120/s3,180/s4,240/s5,300/s6}
{\node[vert] (\y) at (\x:1.2){};}

\foreach \x/\y in {0/v_{19},60/v_{20},120/v_{21},180/v_{22},240/v_{23},300/v_{24}}
{\node[ver] at (\x:0.75){\tiny{$\y$}};}
\end{scope}

\begin{scope}[shift={(-6,-2)},rotate=-30]
\foreach \x/\y in {0/t1,60/t2,120/t3,180/t4,240/t5,300/t6}
{\node[vert] (\y) at (\x:1.2){};}

\foreach \x/\y in {0/v_{25},60/v_{26},120/v_{27},180/v_{28},240/v_{29},300/v_{30}}
{\node[ver] at (\x:0.75){\tiny{$\y$}};}
\end{scope}

\begin{scope}[shift={(-2,-2)},rotate=-30]
\foreach \x/\y in {0/u1,60/u2,120/u3,180/u4,240/u5,300/u6}
{\node[vert] (\y) at (\x:1.2){};}

\foreach \x/\y in {0/v_{31},60/v_{32},120/v_{33},180/v_{34},240/v_{35},300/v_{36}}
{\node[ver] at (\x:0.75){\tiny{$\y$}};}
\end{scope}

\begin{scope}[shift={(2,-2)},rotate=-30]
\foreach \x/\y in {0/v1,60/v2,120/v3,180/v4,240/v5,300/v6}
{\node[vert] (\y) at (\x:1.2){};}

\foreach \x/\y in {0/v_{37},60/v_{38},120/v_{39},180/v_{40},240/v_{41},300/v_{42}}
{\node[ver] at (\x:0.75){\tiny{$\y$}};}
\end{scope}

\begin{scope}[shift={(6,-2)},rotate=-30]
\foreach \x/\y in {0/w1,60/w2,120/w3,180/w4,240/w5,300/w6}
{\node[vert] (\y) at (\x:1.2){};}

\foreach \x/\y in {0/v_{43},60/v_{44},120/v_{45},180/v_{46},240/v_{47},300/v_{48}}
{\node[ver] at (\x:0.75){\tiny{$\y$}};}
\end{scope}

\foreach \x/\y in {1/1',1'/2',2'/2,2/3',3'/4',4'/3,3/5',5'/6',6'/4,4/7',7'/8',8'/5,5/9',9'/10',10'/6,6/11',11'/12',12'/7,7/13',13'/14',14'/8,8/15',15'/16',16'/1}
{\path[edge,dotted] (\x) -- (\y);}

\foreach \x/\y in {p1/p2,p2/p3,p3/p4,p4/p5,p5/p6,p6/p1,q1/q2,q2/q3,q3/q4,q4/q5,q5/q6,q6/q1,r1/r2,r2/r3,r3/r4,r4/r5,r5/r6,r6/r1,s1/s2,s2/s3,s3/s4,s4/s5,s5/s6,s6/s1,t1/t2,t2/t3,t3/t4,t4/t5,t5/t6,t6/t1,u1/u2,u2/u3,u3/u4,u4/u5,u5/u6,u6/u1,v1/v2,v2/v3,v3/v4,v4/v5,v5/v6,v6/v1,w1/w2,w2/w3,w3/w4,w4/w5,w5/w6,w6/w1}
{\path[edge] (\x) -- (\y);}

\foreach \x/\y in {p1/p2,p3/p4,p5/p6,s1/s2,s3/s4,s5/s6,u1/u2,u3/u4,u5/u6,v1/v2,v3/v4,v5/v6,q1/q6,q2/q3,q4/q5,r1/r6,r2/r3,r4/r5,t1/t6,t2/t3,t4/t5,w1/w6,w2/w3,w4/w5}
{{\draw [line width=3pt, line cap=round, dash pattern=on 0pt off 1.3\pgflinewidth]  (\x) -- (\y);}}

\foreach \x/\y in {p2/q4,p1/q5,q1/r4,r1/s5,r6/s6,s1/w3,v1/w5,v2/w4,v4/u3,t1/u5,t6/u6,p6/t2,p5/t3,t4/7',t5/8',p3/5',p4/6',q2/3',q3/4',w1/14',w6/13',v5/11',v6/12',r2/1',r3/2'}
{\path[edge,dashed] (\x) -- (\y);}

\draw[edge, dashed] plot [smooth,tension=0.7] coordinates{(u2) (-1,-4) (-5,-5) (10')};

\draw[edge, dashed] plot [smooth,tension=0.7] coordinates{(u1) (-2,-4) (-6,-5) (9')};

\draw[edge, dashed] plot [smooth,tension=0.7] coordinates{(s2) (7.5,1) (w2)};

\draw[edge, dashed] plot [smooth,tension=0.7] coordinates{(s3) (8.5,2.5) (8.5,0.5) (15')};

\draw[edge, dashed] plot [smooth,tension=0.7] coordinates{(s4) (5.5,3.5)(8,3.5) (9.5,1.5) (16')};

\draw[edge, dashed] plot [smooth,tension=0.7] coordinates{(q6) (-1,0.5) (r5)};

\draw[edge, dashed] plot [smooth,tension=0.7] coordinates{(u4) (-1.5,0) (v3)};

\begin{scope} [scale=1, shift = {(-5, -7.3)}]
\foreach \x/\y/\z in {1/-0.5/0,5/-0.5/1,9/-0.5/2}
{\node[ver] () at (\x,\y){$\z$};}
\path[edge,dashed] (0,0) -- (2,0);
\path[edge] (4,0) -- (6,0);
\draw [line width=3pt, line cap=round, dash pattern=on 0pt off 1.3\pgflinewidth]  (4,0) -- (6,0);
\path[edge] (8,0) -- (10,0);

\end{scope}

\end{tikzpicture}

 \caption{Embedding on $\#_3 \mathbb{RP}^2$ of gem representing $\#_{3} \mathbb{RP}^2$ of type $(4,6,16)$.}  \label{fig:7}
\end{figure}
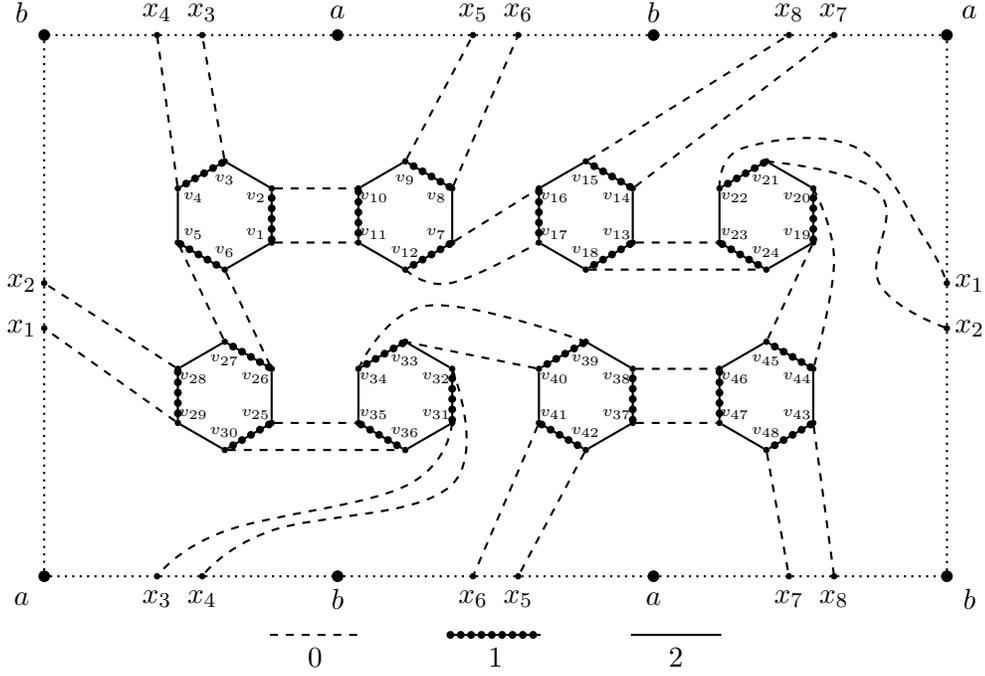

\noindent \textbf{Figure \ref{fig:8}.}  In this illustration, the surface contains two $0$-cells labeled $a$ and $b$, four $1$-cells, and one $2$-cell. Therefore, the Euler characteristic of the surface is $-1$. The $\{1,2\}$-colored six $6$-cycles $A_1,A_2,\dots, A_6$ bound the hexagons $v_1v_2v_3v_4v_5v_6, v_7v_8v_9v_{10}v_{11}v_{12},\dots, v_{31}v_{32}v_{33}v_{34}v_{35}v_{36}$, respectively. The $\{0,2\}$-colored two $18$-cycles $B_1,B_2$ bound the faces $v_3x_3v_{22}v_{23}v_{29}v_{30}v_{34}$ $v_{35}x_2v_{19}v_{24}v_{28}v_{27}v_{10}v_{9}v_{1}v_{6}x_5v_{7}v_8v_2$ containing $a$, $v_4x_4v_{21}v_{20}x_1v_{36}v_{31}v_{18}v_{13}x_7v_{15}v_{14}x_8v_{16}v_{17}v_{32}$ $v_{33}v_{25}v_{26}v_{11}v_{12}x_6v_5$ containing $b$, respectively. The $\{0,1\}$-colored nine $4$-cycles $C_1,C_2,\dots, C_9$ bound the faces $v_2v_{8}v_{9}v_{1}, v_3x_{3}v_{22}v_{21}x_{4}v_{4},\dots, v_5x_{6}v_{12}v_{7}x_{5}v_{6}$, respectively. Thus, the 3-regular colored graph in Figure \ref{fig:8} is a semi-equivelar gem of type $[(4,6,18);36]$ embedded regularly on the surface $\#_3 \mathbb{RP}^2$.

\begin{figure}[h!]
\tikzstyle{ver}=[]
\tikzstyle{vert}=[circle, draw, fill=black!100, inner sep=0pt, minimum width=2pt]
\tikzstyle{verti}=[circle, draw, fill=black!100, inner sep=0pt, minimum width=4pt]
\tikzstyle{edge} = [draw,thick,-]
    \centering
\begin{tikzpicture}[scale=0.6]

\begin{scope}
\foreach \x/\y/\z in
{10/6/1,3.5/6/2,-3.5/6/3,-10/6/4,-10/0/5,-10/-6/6,0/-6/7,10/-6/8}
{\node[verti] (\z) at (\x,\y){};}

\foreach \x/\y/\z in
{10.5/6.5/b,3.5/6.5/b,-3.5/6.5/a,-10.5/6.5/b,-10.5/0/a,-10.5/-6.5/b,0/-6.5/a,10.5/-6.5/b}
{\node[ver] (\z) at (\x,\y){$\z$};}

\foreach \x/\y/\z in
{7.5/6/1',6.5/6/2',0.5/6/3',-0.5/6/4',-6.5/6/5',-7.5/6/6',-10/3.5/7',-10/2.5/8',-10/-2.5/9',-10/-3.5/10',-5.5/-6/11',-4.5/-6/12',4.5/-6/13',5.5/-6/14',10/2.5/15',10/3.5/16'}
{\node[vert] (\z) at (\x,\y){};}

\foreach \x/\y/\z in
{7.5/6.5/x_7,6.5/6.5/x_8,0.5/6.5/x_6,-0.5/6.5/x_5,-6.5/6.5/x_3,-7.5/6.5/x_4,-10.5/3.5/x_6,-10.5/2.5/x_5,-10.5/-2.5/x_2,-10.5/-3.5/x_1,-5.5/-6.5/x_4,-4.5/-6.5/x_3,4.5/-6.5/x_2,5.5/-6.5/x_1,10.5/2.5/x_7,10.5/3.5/x_8}
{\node[ver] (\z) at (\x,\y){$\z$};}

\end{scope}

\begin{scope}[shift={(-5.5,2)},rotate=-30]
\foreach \x/\y in {60/p1,120/p2,180/p3,240/p4,300/p5,360/p6}
{\node[vert] (\y) at (\x:1.3){};}

\foreach \x/\y in {60/v_1,120/v_2,180/v_3,240/v_4,300/v_5,360/v_6}
{\node[ver] at (\x:0.8){\footnotesize{$\y$}};}
\end{scope}

\begin{scope}[shift={(0,2)},rotate=-30]
\foreach \x/\y in {60/q1,120/q2,180/q3,240/q4,300/q5,360/q6}
{\node[vert] (\y) at (\x:1.3){};}

\foreach \x/\y in {60/v_7,120/v_8,180/v_9,240/v_{10},300/v_{11},360/v_{12}}
{\node[ver] at (\x:0.8){\footnotesize{$\y$}};}
\end{scope}

\begin{scope}[shift={(5.5,2)},rotate=-30]
\foreach \x/\y in {60/r1,120/r2,180/r3,240/r4,300/r5,360/r6}
{\node[vert] (\y) at (\x:1.3){};}

\foreach \x/\y in {60/v_{13},120/v_{14},180/v_{15},240/v_{16},300/v_{17},360/v_{18}}
{\node[ver] at (\x:0.8){\footnotesize{$\y$}};}
\end{scope}

\begin{scope}[shift={(-5.5,-2)},rotate=-30]
\foreach \x/\y in {60/s1,120/s2,180/s3,240/s4,300/s5,360/s6}
{\node[vert] (\y) at (\x:1.3){};}

\foreach \x/\y in {60/v_{19},120/v_{20},180/v_{21},240/v_{22},300/v_{23},360/v_{24}}
{\node[ver] at (\x:0.8){\footnotesize{$\y$}};}
\end{scope}

\begin{scope}[shift={(0,-2)},rotate=-30]
\foreach \x/\y in {60/t1,120/t2,180/t3,240/t4,300/t5,360/t6}
{\node[vert] (\y) at (\x:1.3){};}

\foreach \x/\y in {60/v_{25},120/v_{26},180/v_{27},240/v_{28},300/v_{29},360/v_{30}}
{\node[ver] at (\x:0.8){\footnotesize{$\y$}};}
\end{scope}

\begin{scope}[shift={(5.5,-2)},rotate=-30]
\foreach \x/\y in {60/u1,120/u2,180/u3,240/u4,300/u5,360/u6}
{\node[vert] (\y) at (\x:1.3){};}

\foreach \x/\y in {60/v_{31},120/v_{32},180/v_{33},240/v_{34},300/v_{35},360/v_{36}}
{\node[ver] at (\x:0.8){\scriptsize{$\y$}};}
\end{scope}

\foreach \x/\y in {1/2,2/3,3/4,4/5,5/6,6/7,7/8,8/1}
{\path[edge,dotted] (\x) -- (\y);}

\foreach \x/\y in {p1/p2,p2/p3,p3/p4,p4/p5,p5/p6,p6/p1}
{\path[edge] (\x) -- (\y);}

\foreach \x/\y in {p1/p2,p3/p4,p5/p6}
{{\draw [line width=3pt, line cap=round, dash pattern=on 0pt off 1.3\pgflinewidth]  (\x) -- (\y);}}

\foreach \x/\y in {q1/q2,q2/q3,q3/q4,q4/q5,q5/q6,q6/q1}
{\path[edge] (\x) -- (\y);}

\foreach \x/\y in {q1/q6,q2/q3,q4/q5}
{{\draw [line width=3pt, line cap=round, dash pattern=on 0pt off 1.3\pgflinewidth]  (\x) -- (\y);}}

\foreach \x/\y in {r1/r2,r2/r3,r3/r4,r4/r5,r5/r6,r6/r1}
{\path[edge] (\x) -- (\y);}

\foreach \x/\y in {r1/r2,r3/r4,r5/r6}
{{\draw [line width=3pt, line cap=round, dash pattern=on 0pt off 1.3\pgflinewidth]  (\x) -- (\y);}}

\foreach \x/\y in {s1/s2,s2/s3,s3/s4,s4/s5,s5/s6,s6/s1}
{\path[edge] (\x) -- (\y);}

\foreach \x/\y in {s1/s2,s3/s4,s5/s6}
{{\draw [line width=3pt, line cap=round, dash pattern=on 0pt off 1.3\pgflinewidth]  (\x) -- (\y);}}

\foreach \x/\y in {t1/t2,t2/t3,t3/t4,t4/t5,t5/t6,t6/t1}
{\path[edge] (\x) -- (\y);}

\foreach \x/\y in {t1/t6,t2/t3,t4/t5}
{{\draw [line width=3pt, line cap=round, dash pattern=on 0pt off 1.3\pgflinewidth]  (\x) -- (\y);}}

\foreach \x/\y in {u1/u2,u2/u3,u3/u4,u4/u5,u5/u6,u6/u1}
{\path[edge] (\x) -- (\y);}

\foreach \x/\y in {u1/u2,u3/u4,u5/u6}
{{\draw [line width=3pt, line cap=round, dash pattern=on 0pt off 1.3\pgflinewidth]  (\x) -- (\y);}}

\foreach \x/\y in {p1/q3,p2/q2,q4/t3,q5/t2,r6/u1,r5/u2,t1/u3,t6/u4,s6/t4,s5/t5,u5/13',u6/14',r1/15',r2/16',r3/1',q1/4',p3/5',s4/12'}
{\path[edge,dashed] (\x) -- (\y);}

\draw[edge, dashed] plot [smooth,tension=0.7] coordinates{(p4) (-7.5,2.5) (6')};

\draw[edge, dashed] plot [smooth,tension=0.7] coordinates{(s3) (-7.5,-2.5) (11')};

\draw[edge, dashed] plot [smooth,tension=0.7] coordinates{(r4) (3.5,2.5) (2')};

\draw[edge, dashed] plot [smooth,tension=0.7] coordinates{(q6) (2,2.5) (3')};

\draw[edge, dashed] plot [smooth,tension=0.7] coordinates{(p5) (-8,1.5) (7')};

\draw[edge, dashed] plot [smooth,tension=0.7] coordinates{(p6) (-5.5,0.3) (-8,1) (8')};

\draw[edge, dashed] plot [smooth,tension=0.7] coordinates{(s2) (-7.5,-1.3) (10')};

\draw[edge, dashed] plot [smooth,tension=0.7] coordinates{(s1) (-5.5,-0.3) (-8,-1) (9')};

\begin{scope} [scale=1, shift = {(-5, -7.3)}]
\foreach \x/\y/\z in {1/-0.5/0,5/-0.5/1,9/-0.5/2}
{\node[ver] () at (\x,\y){$\z$};}
\path[edge,dashed] (0,0) -- (2,0);
\path[edge] (4,0) -- (6,0);
\draw [line width=3pt, line cap=round, dash pattern=on 0pt off 1.3\pgflinewidth]  (4,0) -- (6,0);
\path[edge] (8,0) -- (10,0);

\end{scope}  

 \end{tikzpicture}

 \caption{Embedding on $\#_3 \mathbb{RP}^2$ of gem representing $\#_{3} \mathbb{RP}^2$ of type $(4,6,18)$.}  \label{fig:8}
\end{figure}
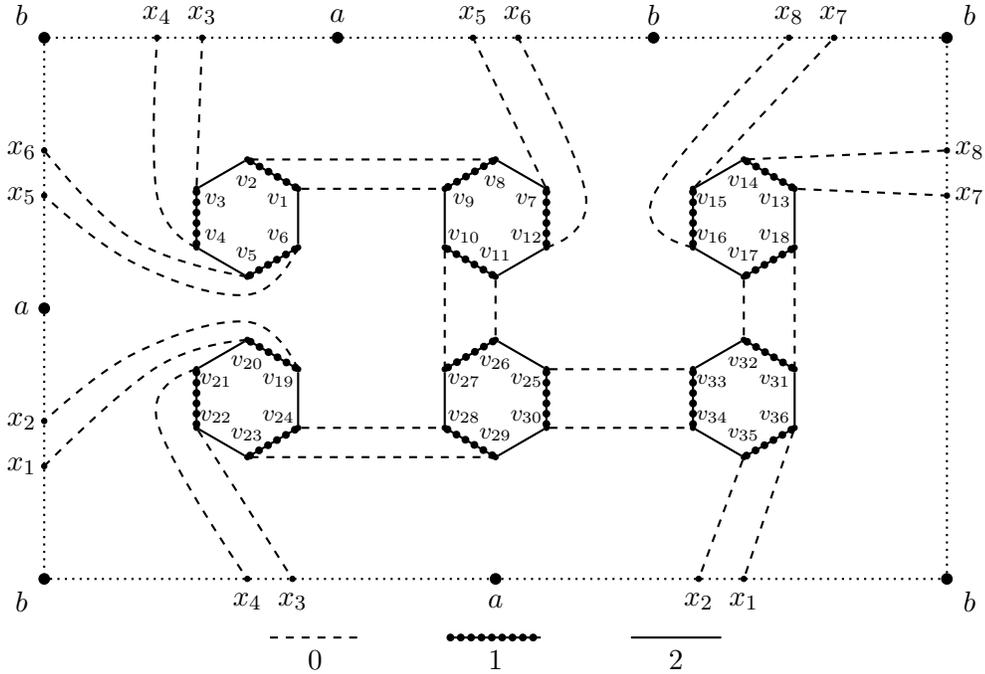

\noindent \textbf{Figure \ref{fig:9}.} In this figure, the surface exhibits only one $0$-cell labeled $a$, three $1$-cells, and one $2$-cell. Hence, the Euler characteristic of the surface is $-1$. The $\{1,2\}$-colored four $6$-cycles $A_1,A_2,A_3,A_4$ bound the hexagonal faces $v_1v_2v_3v_4v_5v_6, v_7v_8v_9v_{10}v_{11}v_{12}, v_{13}v_{14}v_{15}v_{16}v_{17}v_{18}, v_{19}$ $v_{20}v_{21}v_{22}v_{23}v_{24}$, respectively. The $\{0,2\}$-colored $24$-cycle bound the face $v_6x_4v_{8}v_{7}x_5v_{24}v_{19}v_{11}v_{12}$ $x_6v_{23}v_{22}v_{17}v_{16}x_1v_{3}v_{4}v_{14}v_{15}x_2v_{2}v_1x_3v_9v_{10}v_{20}v_{21}v_{18}v_{13}v_5$ containing the $0$-cell $a$. The $\{0,1\}$-colored six $4$-cycles $C_1,C_2,\dots, C_6$ bound the faces $v_5v_{13}v_{14}v_{4}, v_3x_{1}v_{16}v_{15}x_{2}v_{2},\dots, v_1x_{3}v_{9}v_{8}x_{4}v_{6}$, respectively. Thus, the 3-regular colored graph in Figure \ref{fig:9} is a semi-equivelar gem embedded regularly on the surface $\#_3 \mathbb{RP}^2$ of type $[(4,6,24);24]$.

\begin{figure}[h!]
\tikzstyle{ver}=[]
\tikzstyle{vert}=[circle, draw, fill=black!100, inner sep=0pt, minimum width=2pt]
\tikzstyle{verti}=[circle, draw, fill=black!100, inner sep=0pt, minimum width=4pt]
\tikzstyle{edge} = [draw,thick,-]
    \centering
\begin{tikzpicture}[scale=0.45]
\begin{scope}

      \foreach \x/\y/\z in
{10/6/1,0/6/2,-10/6/3,-10/0/4,-10/-6/5,10/-6/6}
{\node[verti] (\z) at (\x,\y){};}

\foreach \x/\y/\z in
{10.5/6.5/a,0/6.5/a,-10.5/6.5/a,-10.5/0/a,-10.5/-6.5/a,10.5/-6.5/a}
{\node[ver] (\z) at (\x,\y){$\z$};}

\foreach \x/\y/\z in
{5.5/6/1',4.5/6/2',-4.5/6/3',-5.5/6/4',-10/3.5/5',-10/2.5/6',-10/-2.5/7',-10/-3.5/8',4.5/-6/9',5.5/-6/10',10/2.5/11',10/3.5/12'}
{\node[vert] (\z) at (\x,\y){};}

\foreach \x/\y/\z in
{5.5/6.5/x_4,4.5/6.5/x_3,-4.5/6.5/x_4,-5.5/6.5/x_3,-10.5/3.5/x_2,-10.5/2.5/x_1,-10.5/-2.5/x_2,-10.5/-3.5/x_1,4.5/-6.5/x_6,5.5/-6.5/x_5,10.5/2.5/x_6,10.5/3.5/x_5}
{\node[ver] (\z) at (\x,\y){$\z$};}

\end{scope}

\begin{scope}[shift={(-6,2.5)}]
\foreach \x/\y in {60/w1,120/w2,180/w3,240/w4,300/w5,360/w6}
{\node[vert] (\y) at (\x:1.5){};}

\foreach \x/\y in {60/v_1,120/v_2,180/v_3,240/v_4,300/v_5,360/v_6}
{\node[ver] at (\x:0.95){\tiny{$\y$}};}
\end{scope}

\begin{scope}[shift={(6,2.5)}]
\foreach \x/\y in {60/x1,120/x2,180/x3,240/x4,300/x5,360/x6}
{\node[vert] (\y) at (\x:1.5){};}

\foreach \x/\y in {60/v_7,120/v_8,180/v_9,240/v_{10},300/v_{11},360/v_{12}}
{\node[ver] at (\x:0.95){\tiny{$\y$}};}
\end{scope}

\begin{scope}[shift={(-6,-2.5)}]
\foreach \x/\y in {60/y1,120/y2,180/y3,240/y4,300/y5,360/y6}
{\node[vert] (\y) at (\x:1.5){};}

\foreach \x/\y in {60/v_{13},120/v_{14},180/v_{15},240/v_{16},300/v_{17},360/v_{18}}
{\node[ver] at (\x:0.95){\tiny{$\y$}};}
\end{scope}

\begin{scope}[shift={(6,-2.5)}]
\foreach \x/\y in {60/z1,120/z2,180/z3,240/z4,300/z5,360/z6}
{\node[vert] (\y) at (\x:1.5){};}

\foreach \x/\y in {60/v_{19},120/v_{20},180/v_{21},240/v_{22},300/v_{23},360/v_{24}}
{\node[ver] at (\x:0.95){\tiny{$\y$}};}
\end{scope}

\foreach \x/\y in {1/1',1'/2',2'/2,2/3',3'/4',4'/3,3/5',5'/6',6'/4,4/7',7'/8',8'/5,5/9',9'/10',10'/6,6/11',11'/12',12'/1}
{\path[edge,dotted] (\x) -- (\y);}

\foreach \x/\y in {x1/x2,x2/x3,x3/x4,x4/x5,x5/x6,x6/x1}
{\path[edge] (\x) -- (\y);}

\foreach \x/\y in {x1/x2,x3/x4,x5/x6}
{{\draw [line width=3pt, line cap=round, dash pattern=on 0pt off 1.3\pgflinewidth]  (\x) -- (\y);}}

\foreach \x/\y in {w1/w2,w2/w3,w3/w4,w4/w5,w5/w6,w6/w1}
{\path[edge] (\x) -- (\y);}

\foreach \x/\y in {w1/w2,w3/w4,w5/w6}
{{\draw [line width=3pt, line cap=round, dash pattern=on 0pt off 1.3\pgflinewidth]  (\x) -- (\y);}}

\foreach \x/\y in {y1/y2,y2/y3,y3/y4,y4/y5,y5/y6,y6/y1}
{\path[edge] (\x) -- (\y);}

\foreach \x/\y in {y2/y3,y4/y5,y6/y1}
{{\draw [line width=3pt, line cap=round, dash pattern=on 0pt off 1.3\pgflinewidth]  (\x) -- (\y);}}

\foreach \x/\y in {z1/z2,z2/z3,z3/z4,z4/z5,z5/z6,z6/z1}
{\path[edge] (\x) -- (\y);}

\foreach \x/\y in {z2/z3,z4/z5,z6/z1}
{{\draw [line width=3pt, line cap=round, dash pattern=on 0pt off 1.3\pgflinewidth]  (\x) -- (\y);}}

\foreach \x/\y in {w4/y2,w5/y1,y6/z3,y5/z4,z1/x5,z2/x4,x6/11',x1/12',x2/1',x3/2',w6/3',w1/4',w2/5',w3/6',y3/7',y4/8',z5/9'}
{\path[edge,dashed] (\x) -- (\y);}


\draw[edge, dashed] plot [smooth,tension=0.5] coordinates{(z6) (7.5,-4.5) (10')};

\begin{scope} [scale=1, shift = {(-5, -7.3)}]
\foreach \x/\y/\z in {1/-0.5/0,5/-0.5/1,9/-0.5/2}
{\node[ver] () at (\x,\y){$\z$};}
\path[edge,dashed] (0,0) -- (2,0);
\path[edge] (4,0) -- (6,0);

\path[edge] (8,0) -- (10,0);
\draw [line width=3pt, line cap=round, dash pattern=on 0pt off 1.3\pgflinewidth]  (8,0) -- (10,0);

\end{scope}  

\end{tikzpicture}

 \caption{Embedding on $\#_3 \mathbb{RP}^2$ of gem representing $\#_{3} \mathbb{RP}^2$ of type $(4,6,24)$.}  \label{fig:9}
\end{figure}

\begin{figure}[h!]
\tikzstyle{ver}=[]
\tikzstyle{vert}=[circle, draw, fill=black!100, inner sep=0pt, minimum width=2pt]
\tikzstyle{verti}=[circle, draw, fill=black!100, inner sep=0pt, minimum width=4pt]
\tikzstyle{edge} = [draw,thick,-]
    \centering
\begin{tikzpicture}[scale=0.5]

\begin{scope}
    \foreach \x/\y in {0/1,36/2,72/3,108/4,144/5,180/6,216/7,252/8,288/9,324/10}
    {\node[verti] (\y) at (\x:10){};}

    \foreach \x/\y in {0/c,36/c,72/a,108/a,144/a,180/c,216/b,252/b,288/b,324/c}
    {\node[ver] (\y) at (\x:10.5){$\y$};}

 \foreach \x/\y in {15/1',21/2',51/3',57/4',87/5',93/6',123/7',129/8',159/9',165/10',195/11',201/12',231/13',237/14',267/15',273/16',303/17',309/18',339/19',345/20'}
    {\node[vert] (\y) at (\x:10){};}

    \foreach \x/\y in {15/x_6,21/x_5,51/x_4,57/x_3,87/x_2,93/x_1,123/x_2,129/x_1,159/x_3,165/x_4,195/x_7,201/x_8,231/x_9,237/x_{10},267/x_9,273/x_{10},303/x_8,309/x_7,339/x_6,345/x_5}
    {\node[ver] (\y) at (\x:10.5){$\y$};}
    
\end{scope}

\begin{scope}[shift={(-6,2.5)},rotate=-20]

\foreach \x/\y in {0/x1,45/x2,90/x3,135/x4,180/x5,225/x6,270/x7,315/x8}
{\node[vert] (\y) at (\x:1.5){};}

\foreach \x/\y in {0/v_9,45/v_{10},90/v_{11},135/v_{12},180/v_{13},225/v_{14},270/v_{15},315/v_{16}}
{\node[ver] at (\x:1){\tiny{$\y$}};}

\end{scope}

\begin{scope}[shift={(6,2.5)},rotate=-20]
\foreach \x/\y in {0/y1,45/y2,90/y3,135/y4,180/y5,225/y6,270/y7,315/y8}
{\node[vert] (\y) at (\x:1.5){};}

\foreach \x/\y in {0/v_{17},45/v_{18},90/v_{19},135/v_{20},180/v_{21},225/v_{22},270/v_{23},315/v_{24}}
{\node[ver] at (\x:1){\tiny{$\y$}};}
\end{scope}

\begin{scope}[shift={(-3,-4)},rotate=-20]
\foreach \x/\y in {0/z1,45/z2,90/z3,135/z4,180/z5,225/z6,270/z7,315/z8}
{\node[vert] (\y) at (\x:1.5){};}

\foreach \x/\y in {0/v_{25},45/v_{26},90/v_{27},135/v_{28},180/v_{29},225/v_{30},270/v_{31},315/v_{32}}
{\node[ver] at (\x:1){\tiny{$\y$}};}
\end{scope}

\begin{scope}[shift={(3,-4)},rotate=-20]
\foreach \x/\y in {0/d1,45/d2,90/d3,135/d4,180/d5,225/d6,270/d7,315/d8}
{\node[vert] (\y) at (\x:1.5){};}

\foreach \x/\y in {0/v_{33},45/v_{34},90/v_{35},135/v_{36},180/v_{37},225/v_{38},270/v_{39},315/v_{40}}
{\node[ver] at (\x:1){\tiny{$\y$}};}
\end{scope}

\begin{scope}[shift={(0,6)},rotate=-20]
\foreach \x/\y in {0/u1,45/u2,90/u3,135/u4,180/u5,225/u6,270/u7,315/u8}
{\node[vert] (\y) at (\x:1.5){};}

\foreach \x/\y in {0/v_{1},45/v_{2},90/v_{3},135/v_{4},180/v_{5},225/v_{6},270/v_{7},315/v_{8}}
{\node[ver] at (\x:1){\tiny{$\y$}};}
\end{scope}

\foreach \x/\y in {1/1',1'/2',2'/2,2/3',3'/4',4'/3,3/5',5'/6',6'/4,4/7',7'/8',8'/5,5/9',9'/10',10'/6,6/11',11'/12',12'/7,7/13',13'/14',14'/8,8/15',15'/16',16'/9,9/17',17'/18',18'/10,10/19',19'/20',20'/1}
{\path[edge,dotted] (\x) -- (\y);}

\foreach \x/\y in {x1/x2,x2/x3,x3/x4,x4/x5,x5/x6,x6/x7,x7/x8,x8/x1}
{\path[edge] (\x) -- (\y);}

\foreach \x/\y in {x1/x2,x3/x4,x5/x6,x7/x8}
{{\draw [line width=3pt, line cap=round, dash pattern=on 0pt off 1.3\pgflinewidth]  (\x) -- (\y);}}

\foreach \x/\y in {y1/y2,y2/y3,y3/y4,y4/y5,y5/y6,y6/y7,y7/y8,y8/y1}
{\path[edge] (\x) -- (\y);}

\foreach \x/\y in {y1/y2,y3/y4,y5/y6,y7/y8}
{{\draw [line width=3pt, line cap=round, dash pattern=on 0pt off 1.3\pgflinewidth]  (\x) -- (\y);}}

\foreach \x/\y in {z1/z2,z2/z3,z3/z4,z4/z5,z5/z6,z6/z7,z7/z8,z8/z1}
{\path[edge] (\x) -- (\y);}

\foreach \x/\y in {z2/z3,z4/z5,z6/z7,z8/z1}
{{\draw [line width=3pt, line cap=round, dash pattern=on 0pt off 1.3\pgflinewidth]  (\x) -- (\y);}}

\foreach \x/\y in {d1/d2,d2/d3,d3/d4,d4/d5,d5/d6,d6/d7,d7/d8,d8/d1}
{\path[edge] (\x) -- (\y);}

\foreach \x/\y in {d2/d3,d4/d5,d6/d7,d8/d1}
{{\draw [line width=3pt, line cap=round, dash pattern=on 0pt off 1.3\pgflinewidth]  (\x) -- (\y);}}

\foreach \x/\y in {u1/u2,u2/u3,u3/u4,u4/u5,u5/u6,u6/u7,u7/u8,u8/u1}
{\path[edge] (\x) -- (\y);}

\foreach \x/\y in {u1/u2,u3/u4,u5/u6,u7/u8}
{{\draw [line width=3pt, line cap=round, dash pattern=on 0pt off 1.3\pgflinewidth]  (\x) -- (\y);}}

\foreach \x/\y in {u1/y4,u8/y5,u7/x2,u6/x3,x1/z3,x8/z4,z1/d6,z2/d5,y7/d3,y6/d4,u2/5',u3/6',u4/7',u5/8',x4/9',x5/10',x6/11',x7/12',z5/13',z6/14',z7/15',z8/16',d7/17',d8/18',d1/19',d2/20',y8/1',y1/2',y2/3',y3/4'}
{\path[edge,dashed] (\x) -- (\y);}

\begin{scope} [scale=1, shift = {(-5, -11.3)}]
\foreach \x/\y/\z in {1/-0.5/0,5/-0.5/1,9/-0.5/2}
{\node[ver] () at (\x,\y){$\z$};}
\path[edge,dashed] (0,0) -- (2,0);
\path[edge] (4,0) -- (6,0);

\path[edge] (8,0) -- (10,0);
\draw [line width=3pt, line cap=round, dash pattern=on 0pt off 1.3\pgflinewidth]  (8,0) -- (10,0);

\end{scope}

\end{tikzpicture}

 \caption{Embedding on $\#_3 \mathbb{RP}^2$ of gem representing $\#_{3} \mathbb{RP}^2$ of type $(4,8,10)$.}  \label{fig:10}
\end{figure}

\begin{figure}[h!]
\tikzstyle{ver}=[]
\tikzstyle{vert}=[circle, draw, fill=black!100, inner sep=0pt, minimum width=2pt]
\tikzstyle{verti}=[circle, draw, fill=black!100, inner sep=0pt, minimum width=4pt]
\tikzstyle{edge} = [draw,thick,-]
    \centering
\begin{tikzpicture}[scale=0.6]

\begin{scope}
\foreach \x/\y/\z in
{10/5/1,3.5/5/2,-3.5/5/3,-10/5/4,-10/-5/5,-3.5/-5/6,3.5/-5/7,10/-5/8}
{\node[verti] (\z) at (\x,\y){};}

\foreach \x/\y/\z in
{10.5/5.5/a,3.5/5.5/b,-3.5/5.5/a,-10.5/5.5/b,-10.5/-5.5/a,-3.5/-5.5/b,3.5/-5.5/a,10.5/-5.5/b}
{\node[ver] (\z) at (\x,\y){$\z$};}

\foreach \x/\y/\z in
{7.5/5/1',6.5/5/2',0.5/5/3',-0.5/5/4',-6.5/5/5',-7.5/5/6',-10/0.5/7',-10/-0.5/8',-7.5/-5/9',-6.5/-5/10',-0.5/-5/11',0.5/-5/12',6.5/-5/13',7.5/-5/14',10/-0.5/15',10/0.5/16'}
{\node[vert] (\z) at (\x,\y){};}

\foreach \x/\y/\z in
{7.5/5.5/x_7,6.5/5.5/x_8,0.5/5.5/x_6,-0.5/5.5/x_5,-6.5/5.5/x_3,-7.5/5.5/x_4,-10.5/0.5/x_2,-10.5/-0.5/x_1,-7.5/-5.5/x_3,-6.5/-5.5/x_4,-0.5/-5.5/x_6,0.5/-5.5/x_5,6.5/-5.5/x_7,7.5/-5.5/x_8,10.5/-0.5/x_{2},10.5/0.5/x_{1}}
{\node[ver] (\z) at (\x,\y){$\z$};}

\end{scope}

\begin{scope}[shift={(-6,0)},rotate=-20]
\foreach \x/\y in {0/x1,45/x2,90/x3,135/x4,180/x5,225/x6,270/x7,315/x8}
{\node[vert] (\y) at (\x:1.5){};}

\foreach \x/\y in {0/v_1,45/v_2,90/v_3,135/v_4,180/v_5,225/v_6,270/v_7,315/v_8}
{\node[ver] at (\x:1){\tiny{$\y$}};}
\end{scope}

\begin{scope}[shift={(0,0)},rotate=-20]
\foreach \x/\y in {0/y1,45/y2,90/y3,135/y4,180/y5,225/y6,270/y7,315/y8}
{\node[vert] (\y) at (\x:1.5){};}

\foreach \x/\y in {0/v_9,45/v_{10},90/v_{11},135/v_{12},180/v_{13},225/v_{14},270/v_{15},315/v_{16}}
{\node[ver] at (\x:1){\tiny{$\y$}};}
\end{scope}

\begin{scope}[shift={(6,0)},rotate=-20]
\foreach \x/\y in {0/z1,45/z2,90/z3,135/z4,180/z5,225/z6,270/z7,315/z8}
{\node[vert] (\y) at (\x:1.5){};}

\foreach \x/\y in {0/v_{17},45/v_{18},90/v_{19},135/v_{20},180/v_{21},225/v_{22},270/v_{23},315/v_{24}}
{\node[ver] at (\x:1){\tiny{$\y$}};}
\end{scope}

\foreach \x/\y in {1/1',1'/2',2'/2,2/3',3'/4',4'/3,3/5',5'/6',6'/4,4/7',7'/8',8'/5,5/9',9'/10',10'/6,6/11',11'/12',12'/7,7/13',13'/14',14'/8,8/15',15'/16',16'/1}
{\path[edge,dotted] (\x) -- (\y);}

\foreach \x/\y in {x1/x2,x2/x3,x3/x4,x4/x5,x5/x6,x6/x7,x7/x8,x8/x1}
{\path[edge] (\x) -- (\y);}

\foreach \x/\y in {x1/x2,x3/x4,x5/x6,x7/x8}
{{\draw [line width=3pt, line cap=round, dash pattern=on 0pt off 1.3\pgflinewidth]  (\x) -- (\y);}}

\foreach \x/\y in {y1/y2,y2/y3,y3/y4,y4/y5,y5/y6,y6/y7,y7/y8,y8/y1}
{\path[edge] (\x) -- (\y);}

\foreach \x/\y in {y1/y2,y3/y4,y5/y6,y7/y8}
{{\draw [line width=3pt, line cap=round, dash pattern=on 0pt off 1.3\pgflinewidth]  (\x) -- (\y);}}

\foreach \x/\y in {z1/z2,z2/z3,z3/z4,z4/z5,z5/z6,z6/z7,z7/z8,z8/z1}
{\path[edge] (\x) -- (\y);}

\foreach \x/\y in {z1/z2,z3/z4,z5/z6,z7/z8}
{{\draw [line width=3pt, line cap=round, dash pattern=on 0pt off 1.3\pgflinewidth]  (\x) -- (\y);}}

\foreach \x/\y in {x2/y5,x3/y4,y1/z6,y8/z7,x4/5',x5/6',x6/7',x7/8',x8/9',x1/10',y2/3',y3/4',y6/11',y7/12',z8/13',z1/14',z2/15',z3/16',z4/1',z5/2'}
{\path[edge,dashed] (\x) -- (\y);}

\begin{scope} [scale=1, shift = {(-5, -7)}]
\foreach \x/\y/\z in {1/-0.5/0,5/-0.5/1,9/-0.5/2}
{\node[ver] () at (\x,\y){$\z$};}
\path[edge,dashed] (0,0) -- (2,0);
\path[edge] (4,0) -- (6,0);

\path[edge] (8,0) -- (10,0);
\draw [line width=3pt, line cap=round, dash pattern=on 0pt off 1.3\pgflinewidth]  (8,0) -- (10,0);

\end{scope} 

\end{tikzpicture}

 \caption{Embedding on $\#_3 \mathbb{RP}^2$ of gem representing $\#_{3} \mathbb{RP}^2$ of type $(4,8,12)$.}  \label{fig:11}
\end{figure}

\begin{figure}[h!]
\tikzstyle{ver}=[]
\tikzstyle{vert}=[circle, draw, fill=black!100, inner sep=0pt, minimum width=2pt]
\tikzstyle{verti}=[circle, draw, fill=black!100, inner sep=0pt, minimum width=4pt]
\tikzstyle{edge} = [draw,thick,-]
    \centering
\begin{tikzpicture}[scale=0.45]
\begin{scope}

      \foreach \x/\y/\z in
{10/6/1,0/6/2,-10/6/3,-10/-6/4,0/-6/5,10/-6/6}
{\node[verti] (\z) at (\x,\y){};}

\foreach \x/\y/\z in
{10.5/6.5/a,0/6.5/a,-10.5/6.5/a,-10.5/-6.5/a,0/-6.5/a,10.5/-6.5/a}
{\node[ver] (\z) at (\x,\y){$\z$};}

\foreach \x/\y/\z in
{5.5/6/1',4.5/6/2',-4.5/6/3',-5.5/6/4',-10/0.5/5',-10/-0.5/6',-5.5/-6/7',-4.5/-6/8',4.5/-6/9',5.5/-6/10',10/-0.5/11',10/0.5/12'}
{\node[vert] (\z) at (\x,\y){};}

\foreach \x/\y/\z in
{5.5/6.5/x_1,4.5/6.5/x_2,-4.5/6.5/x_3,-5.5/6.5/x_4,-10.5/0.5/x_3,-10.5/-0.5/x_4,-5.5/-6.5/x_5,-4.5/-6.5/x_6,4.5/-6.5/x_5,5.5/-6.5/x_6,10.5/-0.5/x_1,10.5/0.5/x_2}
{\node[ver] (\z) at (\x,\y){$\z$};}

    \end{scope}

\begin{scope}[shift={(-3,0)},rotate=-25]
 \foreach \x/\y in {0/x1,45/x2,90/x3,135/x4,180/x5,225/x6,270/x7,315/x8}
 {\node[vert] (\y) at (\x:2){};}

  \foreach \x/\y in {0/v_1,45/v_2,90/v_3,135/v_4,180/v_5,225/v_6,270/v_7,315/v_8}
 {\node[ver] at (\x:1.5){\tiny{$\y$}};}
\end{scope}

\begin{scope}[shift={(3,0)},rotate=-20]
 \foreach \x/\y in {0/y1,45/y2,90/y3,135/y4,180/y5,225/y6,270/y7,315/y8}
 {\node[vert] (\y) at (\x:2){};}

  \foreach \x/\y in {0/v_9,45/v_{10},90/v_{11},135/v_{12},180/v_{13},225/v_{14},270/v_{15},315/v_{16}}
 {\node[ver] at (\x:1.5){\tiny{$\y$}};}
\end{scope}

\foreach \x/\y in {1/1',1'/2',2'/2,2/3',3'/4',4'/3,3/5',5'/6',6'/4,4/7',7'/8',8'/5,5/9',9'/10',10'/6,6/11',11'/12',12'/1}
{\path[edge,dotted] (\x) -- (\y);}

\foreach \x/\y in {x1/x2,x2/x3,x3/x4,x4/x5,x5/x6,x6/x7,x7/x8,x8/x1}
{\path[edge] (\x) -- (\y);}

\foreach \x/\y in {x1/x2,x3/x4,x5/x6,x7/x8}
{{\draw [line width=3pt, line cap=round, dash pattern=on 0pt off 1.3\pgflinewidth]  (\x) -- (\y);}}

\foreach \x/\y in {y1/y2,y2/y3,y3/y4,y4/y5,y5/y6,y6/y7,y7/y8,y8/y1}
{\path[edge] (\x) -- (\y);}

\foreach \x/\y in {y1/y2,y3/y4,y5/y6,y7/y8}
{{\draw [line width=3pt, line cap=round, dash pattern=on 0pt off 1.3\pgflinewidth]  (\x) -- (\y);}}

\foreach \x/\y in {x1/y6,x2/y5,x3/3',x4/4',x5/5',x6/6',x7/7',x8/8',y7/9',y8/10',y1/11',y2/12',y3/1',y4/2'}
{\path[edge,dashed] (\x) -- (\y);}

\begin{scope} [scale=1, shift = {(-5, -7.5)}]
\foreach \x/\y/\z in {1/-0.5/0,5/-0.5/1,9/-0.5/2}
{\node[ver] () at (\x,\y){$\z$};}
\path[edge,dashed] (0,0) -- (2,0);
\path[edge] (8,0) -- (10,0);

\path[edge] (4,0) -- (6,0);
\draw [line width=3pt, line cap=round, dash pattern=on 0pt off 1.3\pgflinewidth]  (4,0) -- (6,0);

\end{scope} 

\end{tikzpicture}

 \caption{Embedding on $\#_3 \mathbb{RP}^2$ of gem representing $\#_{3} \mathbb{RP}^2$ of type $(4,8,16)$.}  \label{fig:12}
\end{figure}

\noindent \textbf{Figure \ref{fig:10}.} In this diagram, the surface is composed of three $0$-cells labeled $a,b$, and $c$, five $1$-cells, and one $2$-cell. Consequently, the Euler characteristic of the surface is $-1$. The $\{1,2\}$-colored five $8$-cycles $A_1,A_2,\dots, A_5$ bound the octagonal faces $v_1v_2v_3v_4v_5v_6v_7v_8, v_9v_{10}v_{11}v_{12}$ $v_{13}v_{14}v_{15}v_{16}, \dots, v_{33}v_{34}v_{35}v_{36}v_{37}v_{38}v_{39}v_{40}$, respectively. One of the four $\{0,2\}$-colored $10$-cycles bound the face  $v_4x_2v_{2}v_{1}v_{20}v_{19}x_3v_{12}v_{11}v_{6}v_{5}x_1v_3$ containing $a$ in its interior. One can look for the other three in Figure \ref{fig:10}. The $\{0,1\}$-colored ten $4$-cycles $C_1,C_2,\dots, C_{10}$ bound the faces $v_1v_{20}v_{21}v_{8}, v_3x_{1}v_{5}v_{4}x_{2}v_{2},\dots, v_{17}x_{5}v_{34}v_{33}x_{6}v_{24}$, respectively. Thus, the 3-regular colored graph in Figure \ref{fig:10} is a semi-equivelar gem of type $[(4,8,10);40]$ embedded regularly on the surface $\#_3 \mathbb{RP}^2$.

\noindent \textbf{Figure \ref{fig:11}.} In this illustration, the surface contains two $0$-cells labeled $a$ and $b$, four $1$-cells, and one $2$-cell. Therefore, the Euler characteristic of the surface is $-1$. The $\{1,2\}$-colored three $8$-cycles $A_1, A_2, A_3$ bound the octagonal faces $v_1v_2v_3v_4v_5v_6v_7v_8, v_9v_{10}v_{11}v_{12}v_{13}v_{14}v_{15}v_{16}, v_{17}v_{18}$ $v_{19}v_{20}v_{21}v_{22}v_{23}v_{24}$, respectively. The two $\{0,2\}$-colored $12$-cycles bound the faces  $v_4x_3v_{8}v_{7}x_1$ $v_{19}v_{20}x_7v_{24}v_{23}v_{16}v_{15}x_5v_{11}v_{12}v_3$ containing $a$ in its interior, $v_1x_4v_{5}v_{6}x_2v_{18}v_{17}x_8v_{21}v_{22}v_{9}v_{10}x_6$ $v_{14}v_{13}v_2$ containing $b$ in its interior, respectively. The $\{0,1\}$-colored six $4$-cycles $C_1,C_2,\dots, C_{6}$ bound the faces $v_2v_{13}v_{12}v_{3}, v_1x_{4}v_{5}v_{4}x_{3}v_{8},\dots, v_{17}x_{8}v_{21}v_{20}x_{7}v_{24}$, respectively. Thus, the 3-regular colored graph in Figure \ref{fig:11} is a semi-equivelar gem of type $[(4,8,12);24]$ embedded regularly on the surface $\#_3 \mathbb{RP}^2$.

\noindent \textbf{Figure \ref{fig:12}.} In this diagram, the surface features only one $0$-cell labeled $a$, three $1$-cells, and one $2$-cell. Consequently, the Euler characteristic of the surface is $-1$. The $\{1,2\}$-colored two $8$-cycles $A_1, A_2$ bound the octagonal faces $v_1v_2v_3v_4v_5v_6v_7v_8, v_9v_{10}v_{11}v_{12}v_{13}v_{14}v_{15}v_{16}$, respectively. The $\{0,2\}$-colored $16$-cycle bound the face  $v_4x_4v_{6}v_{7}x_5v_{15}v_{14}v_1v_8x_6v_{16}v_{9}x_1v_{11}v_{10}x_2v_{12}v_{13}v_2v_3$ $x_3v_5$ containing the $0$-cell $a$ in its interior. The $\{0,1\}$-colored four $4$-cycles $C_1,C_2, C_3, C_{4}$ bound the faces $v_2v_{13}v_{14}v_{1}, v_3x_{3}v_{5}v_{6}x_{4}v_{4}, v_{12}x_{2}v_{10}v_{9}x_{1}v_{11}, v_{7}x_{5}v_{15}v_{16}x_{6}v_{8}$, respectively. Thus, the 3-regular colored graph in Figure \ref{fig:12} is a semi-equivelar gem of type $[(4,8,16);16]$ embedded regularly on the surface $\#_3 \mathbb{RP}^2$.

Thus, Figures \ref{fig:1} through \ref{fig:12} establish the existence of these semi-equivelar gems for each possible type. It follows from Theorem \ref{theorem:possible gem} that these gems represent $\#_3 \mathbb{RP}^2$ itself.
\end{proof}

\begin{remark}
{\rm We emphasize that in defining semi-equivelar graphs, we restricted our focus to cases where the faces, after embedding, are $n$-gons for $n \geq 4$. However, if we extend this to include semi-equivelar gems with 2-gons, Theorem \ref{theorem:possible gem}  still remains valid. This is due to the fact that, in Case 3 of the proof of Lemma \ref{lemma:possible}, Equation \eqref{4} fails to hold when $q_i = 2$ for $0 \leq i \leq l$. Therefore, the possibility of having 2-gons in the regular embedding of a semi-equivelar gem on the surface with Euler characteristic $-1$ can be easily discarded.
    }
\end{remark}

\begin{remark}
{\rm
Let $S$ be a surface with Euler characteristic $-2$. Following a similar argument as given in Lemma \ref{lemma:possible}, we have computed all the possible types of semi-equivelar graphs that can be embedded regularly on the surface $S$. These 31 possibilities are: $(4^5)$, $(6^4)$, $(4^3,6)$, $(4^3,8)$, $(4^3,12)$, $(4,6,4,6)$, $(4^2,6^2)$, $(4,8,4,8)$, $(4^2,8^2)$, $ (8^3)$, $ (10^3)$, $(6^2,8)$, $(6^2,10)$, $(6^2,12)$, $(6^2,18)$, $(10^2,4)$, $(12^2,4)$, $(16^2,4)$, $ (8^2,6)$, $ (12^2,6)$, $(4,6,14)$, $(4,6,16)$, $(4,6,18)$, $(4,6,20)$, $(4,6,24)$, $(4,6,36)$, $(4,8,10)$, $(4,8,12)$, $(4,8,16)$, $(4,8,24)$, and $(4,10,20)$.
It remains for the reader, using a similar construction as in Theorem \ref{thm:construction}, to determine for each of the above types whether there exists a semi-equivelar gem that is embedded regularly on the surface $S$.
    }
\end{remark}

\noindent {\bf Acknowledgement:} The authors wish to express their gratitude to the anonymous referees for their valuable comments and suggestions. Their input has significantly enhanced the quality of this article. The second author is supported by the Mathematical Research Impact Centric Support (MATRICS) Research Grant (MTR/2022/000036) by SERB (India).
				
\medskip

\noindent {\bf Data availability:} The authors declare that all data supporting the findings of this study are available within the article.

\smallskip

\noindent {\bf Declarations}

\smallskip

\noindent {\bf Conflict of interest:} No potential conflict of interest was reported by the authors.

{\footnotesize

\end{document}